\newcommand{\R}{\mathbb{R}}
\newcommand{\N}{\mathbb{N}}
\newcommand{\ep}{\varepsilon}
\newcommand{\pa}{\partial}
\DeclareMathOperator{\supp}{supp}
\newcommand{\lr}[1]{{}\langle{}#1{}\rangle{}}
\newcommand{\alternative}[1]{}
\newcommand{\renorm}[1]{{[\![}#1{]\!]}}
\newcommand{\reinner}[1]{{\langle\hspace{-3pt}\langle}#1{\rangle\hspace{-3pt}\rangle}}
\newtheorem{theorem}{Theorem}[section]
\newtheorem{lemma}[theorem]{Lemma}
\newtheorem{proposition}[theorem]{Proposition}
\newtheorem{corollary}[theorem]{Corollary}
\theoremstyle{remark}
\newtheorem{remark}{Remark}[section]
\theoremstyle{definition}
\newtheorem{definition}{Definition}[section]
\newtheorem{example}{Example}
\numberwithin{equation}{section}
\def\@cite#1#2{[{{\bfseries #1}\if@tempswa , #2\fi}]}
\begin{document}

\begin{center}
\Large{{\bf
Remarks on criticality theory for Schr\"odinger operators and its application to wave equations with potentials
}}
\end{center}

\vspace{5pt}

\begin{center}
Motohiro Sobajima%
\footnote{
Department of Mathematics, 
Faculty of Science and Technology, Tokyo University of Science,  
2641 Yamazaki, Noda-shi, Chiba, 278-8510, Japan,  
E-mail:\ {\tt msobajima1984@gmail.com}}
\end{center}

\newenvironment{summary}{\vspace{.5\baselineskip}\begin{list}{}{%
     \setlength{\baselineskip}{0.85\baselineskip}
     \setlength{\topsep}{0pt}
     \setlength{\leftmargin}{12mm}
     \setlength{\rightmargin}{12mm}
     \setlength{\listparindent}{0mm}
     \setlength{\itemindent}{\listparindent}
     \setlength{\parsep}{0pt}
     \item\relax}}{\end{list}\vspace{.5\baselineskip}}
\begin{summary}
{\footnotesize {\bf Abstract.}
In this paper, we give an alternative perspective of the criticality theory for (nonnegative) Schr\"odinger operators. Schr\"odinger operator $S$ is classified as subcritical/critical in terms of the existence/nonexistence of a positive Green function for the associated elliptic equation $Su=f$. Such a property strongly affects to the large-time behavior of solutions to the parabolic equation $\pa_tv+Sv=0$. In this paper, we propose a remarkable quantity in terms of the structure of Hilbert lattices, which keeps some important properties including the notion of criticality theory. As an application, we study the large-time behavior of solutions to the hyperbolic equation $\pa_t^2w+Sw=0$.

}
\end{summary}
{\footnotesize{\it Mathematics Subject Classification}\/ (2020): %
Primary:%
35J10, 
Secondary:%
35B09, 
47F05. 
}

{\footnotesize{\it Key words and phrases}\/: 
Schr\"odinger operators; Criticality theory; Hyperbolic equations.
}
\tableofcontents

\section{Introduction}

We consider $N$-dimensional Schr\"odinger operators in $\Omega$ 
of the form 
\begin{equation}\label{intro:Schr}
S=-\Delta + V\quad\text{in}\ \Omega, 
\end{equation}
where $\Delta=\sum_{k=1}^N\frac{\pa^2}{\pa x_k^2}$ is the 
usual $N$-dimensional Laplacian, 
$\Omega$ is a connected domain 
and $V$ is real-valued continuous function on $\Omega$. 
Typical examples of the choice of $\Omega$
include the whole space $\R^N$ $(N\geq1)$, 
exterior domains in $\R^N$ $(N\geq 2)$ and also 
the punctured space $\Omega=\R^N\setminus\{0\}$ ($N\geq 2$).
Throughout this paper, we assume that 
the minimal realization $S_{\min}=S|_{C_0^\infty(\Omega)}$ of $S$ is nonnegative in $L^2(\Omega)$,  that is,
 \begin{equation}\label{condition:nonnegative}
\int_{\Omega}\Big(|\nabla u(x)|^2+V(x)|u(x)|^2\Big)\,dx\geq 0, \quad u\in C_0^\infty(\Omega).
\end{equation}
In Simon \cite{Simon1981}, 
the notion of subcriticality and criticality initially appears
to classify nonnegative Schr\"odinger operators 
via the long-time behavior of 
the associated heat kernel $p_S$. 
Here the heat kernel $p_S$ 
is the integral kernel of 
the solution operator 
\begin{align*}
f\mapsto v(\cdot,t)=\int_{\Omega}p_S(\cdot,y,t)f(y)\,dy
\end{align*}
of the initial-value problem:
\begin{equation}\label{intro:eq:para}
\begin{cases}
\pa_tv(x,t)+Sv(x,t)=0 &\text{in}\ \Omega\times (0,\infty),
\\
v(x,t)=0 &\text{on}\ \pa\Omega\times (0,\infty),  
\\
v(x,0)=f(x) &\text{in}\ \Omega. 
\end{cases}
\end{equation}
One important
characterization of subcriticality (resp.\ criticality) 
is described in terms of the existence (resp.\ nonexistence) of the positive minimal Green function $\Phi_S$ 
for the associated elliptic equation $Su=f$ (see Murata \cite{Murata1986}).
The above different descriptions are connected to 
\begin{equation}\label{intro:kernel-green}
\Phi_S(x,y)=\int_{0}^\infty p_S(x,y,t)\,dt
\begin{cases}
\in L_{\rm loc}^1(\Omega\times \Omega)\ \text{if $S$ is subcritical},
\\
=+\infty\ \text{if $S$ is critical}.
\end{cases}
\end{equation}
From the stochastic point of view, 
subcriticality (resp.\ criticality) of 
$-\Delta$ in $\R^N$ corresponds to the transience (resp.\ recurrence) 
of the $N$-dimensional Brownian motion (see Durrett \cite{Book_Durrett}). 
We shall start discussing the notion of the ciriticality theory from 
the following characterization 
in terms of the negative perturbation of Schr\"odinger operators (see also Murata \cite{Murata1986}).

\begin{definition}\label{def:criticailty-Schroedinger}
Let $V$ satisfy \eqref{condition:nonnegative}.
We define 
\begin{itemize}
\item[\bf (i)] $S$ is {\it subcritical} if for every nonnegative function 
$W\in C_0^\infty(\Omega)$, there exists a positive constant $\delta$ such that 
\[
\delta\int_{\Omega}W(x)|u(x)|^2\,dx
\leq 
\int_{\Omega}\Big(|\nabla u(x)|^2+V(x)|u(x)|^2\Big)\,dx, \quad u\in C_0^\infty(\Omega).
\]
\item[\bf (ii)]
$S$ is {\it critical} if $S$ is not subcritical.
\end{itemize}
\end{definition}
\begin{remark}
If \eqref{condition:nonnegative} is false, then $S$ is called supercritical, but 
this situation does not appear in this paper.
\end{remark}

The aim of this paper is to propose some alternative interpretation 
for subcriticality/criticality for Schr\"odinger operators.
\begin{example}\label{exam:hardy-potential}
In the simplest case $S_0=-\Delta_{\R^N}$ ($\Omega=\R^N$ and $V\equiv0$), 
the situation is the following. 
If $N\geq 3$, then the well-known Hardy inequality
\[
C_{\rm H}
\int_{\R^N}\frac{|u(x)|^2}{|x|^2}\,dx
\leq 
\int_{\R^N}|\nabla u(x)|^2\,dx, \quad u\in H^1(\R^N)
\]
with the optimal constant $C_{\rm H}=(\frac{N-2}{2})^2$ 
directly gives that $S_0$ is subcritical. 
In contrast, one can see 
by a scaling argument 
that $S_0$ is critical if $N\leq 2$. 
In this connection, we can also consider 
Schr\"odinger operators with inverse-square potentials
\[
S_{\lambda}=-\Delta + \frac{\lambda }{|x|^2}, \quad \text{in} \ \Omega=\R^N\setminus \{0\}\ (N\geq 2),
\]
where $\lambda\geq \lambda_*=-C_{\rm H}$ is assumed to ensure the nonnegativity of $S_\lambda$. 
In this case, 
$S_\lambda$ is subcritical if $\lambda>\lambda_*$
and $S_{\lambda_*}$ is critical.
\end{example}

By using the standard method of sesquilinear forms 
in Hilbert spaces
(see e.g., Kato \cite{Book_Kato}
and Ouhabaz \cite{Book_Ouhabaz}), 
it is not difficult to prove that 
there uniquely exists the Friedrichs extension of the minimal realization $S_{\min}$,  
which successfully describes the Dirichlet boundary condition at 
the ``regular'' point on $\pa\Omega$ (if $\pa\Omega\neq \emptyset$). Hereafter, we use the notation $S$ as the corresponding Friedrichs extension of $S_{\min}$. 
The positivity preserving property of the generated semigroup $\{e^{-tS}\}_{t\geq0}$ 
(in $L^2(\Omega)$) can be also verified by using 
the positive ($H_{\rm loc}^{2}$-)function $\psi$ satisfying $S\psi=0$, 
which exists under the condition \eqref{condition:nonnegative}
(see e.g., Agmon \cite{Agmon1982}). 

\begin{remark}
In this connection, we can also generalize the class of potentials. 
For instance, we can consider $V=V_+-V_-$, where 
$V_+$ and belong to 
the local Kato class and the Kato class, 
respectively (for detail, see e.g., Lucia--Prashanth \cite{LP2018}). 
But we do not enter the detail to avoid an awkward consideration.
\end{remark}

In the present paper, 
we propose an alternative interpretation of subcriticality and criticality. 
The following assertion gives a remarkable perspective 
involving an equivalent statement of subcriticality. 
To describe this, we use 
$R(A)=\{Au\;;\;u\in D(A)\}$ (the range of the operator $A$) 
and 
$[L^2(\Omega)]_+=\{f\in L^2(\Omega)\;;\;f\geq 0\}$.

\begin{theorem}\label{thm:equivalence}
Let $S$ be the Friedrichs extension of $S_{\min}$ satisfying \eqref{condition:nonnegative}
and let $\alpha>0$. The following assertions are equivalent:
\begin{itemize}
\item[\bf (i)] 
$R(S^{\alpha})\cap [L^2(\Omega)]_{+}\neq \{0\}$.
\item[\bf (ii)] 
$C_0^\infty(\Omega)\subset R(S^{\alpha})$.
\end{itemize}
In particular,
the above two conditions with $\alpha=\frac{1}{2}$ 
hold if and only if $S$ is subcritical.
\end{theorem}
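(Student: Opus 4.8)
The plan is to prove the chain (ii) $\Rightarrow$ (i) $\Rightarrow$ subcriticality $\Rightarrow$ (ii), together with the easy remark that the equivalence of (i) and (ii) for one value of $\alpha$ is stable under varying $\alpha$ via functional calculus. The implication (ii) $\Rightarrow$ (i) is immediate: any nonzero nonnegative $W\in C_0^\infty(\Omega)$ lies in $R(S^\alpha)$, so $R(S^\alpha)\cap[L^2(\Omega)]_+\ni W\neq 0$. The content is in the other two implications.

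For (i) $\Rightarrow$ subcriticality, suppose $S^\alpha u=f$ for some $u\in D(S^\alpha)$ with $0\leq f\in L^2(\Omega)$, $f\not\equiv 0$. The idea is to produce, from this datum, the uniform lower bound in Definition~\ref{def:criticailty-Schroedinger}(i). Using the spectral theorem and the Laplace-transform representation $S^{-\alpha}=c_\alpha\int_0^\infty t^{\alpha-1}e^{-tS}\,dt$ (valid in the sense of forms/quadratic forms once restricted to the orthogonal complement of $\ker S$), one has $u=S^{-\alpha}f$ up to an element of $\ker S$; since $e^{-tS}$ is positivity preserving, $u$ (in the subcritical branch we are trying to establish) is a nonnegative supersolution-type object, and the pairing $\langle f,g\rangle=\langle S^\alpha u,g\rangle$ controls $\langle W^{1/2}\,\cdot\,,W^{1/2}\,\cdot\,\rangle$ against the form of $S$ by a ground-state / Allegretto--Piepenbrink type comparison. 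Concretely I would argue contrapositively: if $S$ is \emph{critical}, then there is a normalized Palais--Smale / null sequence $(u_n)\subset C_0^\infty(\Omega)$ with $\int(|\nabla u_n|^2+V|u_n|^2)\to 0$ while $\int W|u_n|^2\geq 1$ for a fixed $0\leq W\in C_0^\infty(\Omega)$; such a sequence converges (after the usual normalization) to the Agmon ground state $\psi$ with $S\psi=0$, and then testing $S^\alpha u=f\geq 0$ against approximations built from $\psi$ forces $\langle f,\psi\rangle=0$, hence $f\equiv 0$ by strict positivity of $\psi$ and $f$, a contradiction. This is the step I expect to be the main obstacle: making the fractional power $S^\alpha$ interact cleanly with the (non-$L^2$, only $H^2_{\rm loc}$) ground state $\psi$ requires care — one must regularize $\psi$, control boundary/decay terms, and justify the limiting pairing, presumably via the heat-semigroup representation of $S^{-\alpha}$ and monotone convergence.

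For subcriticality $\Rightarrow$ (ii), assume $S$ is subcritical and fix $g\in C_0^\infty(\Omega)$. Writing $g=g_+-g_-$ it suffices to solve $S^\alpha u=g$ for $0\leq g\in C_0^\infty(\Omega)$. Subcriticality gives, for any $0\leq W\in C_0^\infty(\Omega)$ with $\operatorname{supp}W\supset\operatorname{supp}g$, the inequality $\delta\int W|v|^2\leq \langle Sv,v\rangle$; equivalently $S\geq \delta W$ in the form sense on (the form domain), which yields the two-sided estimate needed to see that $g\in R(S)$ — indeed $S^{-1}g$ is well defined as an element of $L^2$ because $\langle S^{-1}g,g\rangle\leq \delta^{-1}\int W^{-1}|g|^2<\infty$ after a Hardy-type localization, and more generally $g\in R(S^\alpha)$ follows because $S^{-\alpha}g=c_\alpha\int_0^\infty t^{\alpha-1}e^{-tS}g\,dt$ converges in $L^2(\Omega)$: for large $t$ one uses the subcritical decay of the heat semigroup encoded in \eqref{intro:kernel-green} (so that $\int_1^\infty t^{\alpha-1}\|e^{-tS}g\|_2\,dt<\infty$), and for small $t$ the integrand is bounded. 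Then $u:=S^{-\alpha}g\in D(S^\alpha)$ and $S^\alpha u=g$, giving $C_0^\infty(\Omega)\subset R(S^\alpha)$.

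Finally, the "in particular" claim: taking $\alpha=\tfrac12$, condition (ii) reads $C_0^\infty(\Omega)\subset R(S^{1/2})=R(|S|^{1/2})$, which by the closed-range / form-domain description of $S^{1/2}$ is exactly the solvability of the elliptic problem $Su=f$ in the energy space for all $f\in C_0^\infty(\Omega)$ with the a-priori bound of Definition~\ref{def:criticailty-Schroedinger}(i) — i.e. the existence of the positive minimal Green function, which is the standard characterization of subcriticality recalled around \eqref{intro:kernel-green}. Since the equivalence (i) $\Leftrightarrow$ (ii) has just been shown to be independent of $\alpha>0$ (both are equivalent to subcriticality), this completes the proof. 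The one genuinely delicate point, as noted, is the critical-case pairing argument against the Agmon ground state; everything else is bookkeeping with the Laplace representation of $S^{-\alpha}$ and the positivity of $e^{-tS}$.
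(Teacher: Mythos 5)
There is a structural error that sinks the argument. Your chain is (ii) $\Rightarrow$ (i) $\Rightarrow$ subcriticality $\Rightarrow$ (ii), and at the end you assert that (i) and (ii) ``are independent of $\alpha>0$ (both are equivalent to subcriticality).'' But the theorem asserts the equivalence of (i) and (ii) for each \emph{fixed} $\alpha>0$, and these conditions genuinely depend on $\alpha$: for $S_0=-\Delta_{\R^N}$ the paper records $I_{S_0}=(0,\frac{N}{4})$. Thus for $N\ge 3$ the operator is subcritical yet (ii) fails for $\alpha\ge \frac{N}{4}$, so ``subcriticality $\Rightarrow$ (ii)'' is false for general $\alpha$; and for $N=2$ the operator is critical yet (i) holds for every $\alpha\in(0,\frac12)$, so ``(i) $\Rightarrow$ subcriticality'' is also false for general $\alpha$. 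Only at $\alpha=\frac12$ do these conditions coincide with subcriticality --- that is exactly the ``in particular'' clause, not a tool available for proving the main equivalence. No amount of care with the Agmon ground state or the Laplace representation of $S^{-\alpha}$ will repair this architecture.

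What is needed --- and what the paper does --- is a direct proof of (i) $\Rightarrow$ (ii) for fixed $\alpha$, with no detour through subcriticality. The paper characterizes $R(S^\alpha)$ by the finiteness of $\renorm{g}_{R(S^\alpha)}^2=\int_0^\infty\|t^\alpha e^{-tS}g\|_{L^2(\Omega)}^2\,\frac{dt}{t}$ (Lemma \ref{lem:characterization-range}); then, given $0\le g_0\in R(S^\alpha)$ with $g_0\neq0$ and any $g\in C_0^\infty(\Omega)$, the strict positivity and continuity of the heat kernel yield $|g|\le Ce^{-t_0S}g_0$ on $\Omega$ for some $C,t_0>0$, whence $|e^{-tS}g|\le Ce^{-(t+t_0)S}g_0$ and $\renorm{g}_{R(S^\alpha)}\le C\renorm{g_0}_{R(S^\alpha)}<\infty$ by $L^2$-contractivity. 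The link to subcriticality is made separately and only at $\alpha=\frac12$, via the identity $\Phi_{S,1/2}(x,y)=\int_0^\infty p_S(x,y,t)\,dt=\Phi_S(x,y)$ together with Murata's Green-function characterization (Lemmas \ref{lem:Green} and \ref{lem:DtoGreen}). A secondary problem in your write-up: in the step ``subcriticality $\Rightarrow$ (ii)'' you invoke \eqref{intro:kernel-green} to conclude $\int_1^\infty t^{\alpha-1}\|e^{-tS}g\|_{L^2}\,dt<\infty$, but \eqref{intro:kernel-green} only controls $\int_0^\infty p_S(x,y,t)\,dt$ locally, i.e.\ precisely the $\alpha=\frac12$ quantity; it supplies no decay rate that would handle $\alpha>\frac12$, consistent with the fact that the implication is false there.
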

We emphasize that 
Theorem \ref{thm:equivalence} suggests that 
the interval
\[
I_S=\big\{\alpha> 0\;;\;R(S^{\alpha})\cap [L^2(\Omega)]_{+}\neq \{0\}\big\}
\]
has precise information including subcriticality and criticality. 
Moreover, 
since the interval $I_S$ can be 
defined by using only functional analytic tools ($C_0$-semigroups and positive cones),
this representation enables us to extend the criticality theory for 
Schr\"odinger operators to that for nonnegative 
selfadjoint operators in Hilbert lattices.

\begin{example}
In the case
$S_0=-\Delta_{\R^N}$, 
we can determine $I_{S_0}=(0,\frac{N}{4})$ 
and therefore one has a kind 
of characterization of the dimension $N=4\sup I_{S_0}$.
A similar consideration is also applicable to 
the sub-Laplacian $\mathcal{L}$ on the nilpotent Lie group $G$ 
with the homogeneous dimension $Q>0$ which was effectively used 
with another aspect (see e.g., Folland \cite{Folland1975}).
\end{example}
\begin{example}
We can also consider the (Dirichlet) Laplacian $\Delta_\Omega$
in an exterior domain $\Omega\subset \R^N$.
Then $I_{-\Delta_{\Omega}}$ can be determined as
$I_{-\Delta_{\Omega}}=(0,\frac{1}{2}]$ if $N=2$ and  
$I_{-\Delta_{\Omega}}=(0,\frac{N}{4})$ if $N\geq 3$. 
Especially, for the case $N=2$, it can be shown 
by the heat kernel estimate in Grigoryan and Saloff-Coste \cite{GS2002}. 
\end{example}

Moreover, if $S$ does not have zero-eigenvalue, 
then the range $R(S^{\alpha})$ naturally equips the norm 
\[
\|g\|_{R(S^{\alpha})}=
\|g\|_{L^2(\Omega)}
+
\|g_*\|_{L^2(\Omega)} \quad(g_*\in D(S^\alpha)\text{ and }S^{\alpha}g_*=g)
\]
and forms a Banach space. 
If $\alpha\in I_S$, then $C_0^\infty(\Omega)$ 
can be used as a dense subset of $R(S^{\alpha})$ as usual. Namely, 
\begin{theorem}\label{thm:density}
Let $S$ be the Friedrichs extension of $S_{\min}$ satisfying \eqref{condition:nonnegative}. 
If $\alpha\in I_S$, then 
$C_0^\infty(\Omega)$ is dense in $R(S^\alpha)$ with respect to the topology induced by the norm 
$\|\cdot\|_{R(S^{\alpha})}$. 
\end{theorem}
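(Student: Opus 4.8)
Because $\alpha\in I_S$, Theorem~\ref{thm:equivalence} gives $C_0^\infty(\Omega)\subset R(S^\alpha)=D(S^{-\alpha})$, so every $\varphi\in C_0^\infty(\Omega)$ satisfies $\varphi_*=S^{-\alpha}\varphi$ and $\|\varphi-g\|_{R(S^\alpha)}=\|\varphi-g\|_{L^2(\Omega)}+\|S^{-\alpha}\varphi-g_*\|_{L^2(\Omega)}$; hence, for a given $g\in R(S^\alpha)$, it suffices to produce $\varphi_n\in C_0^\infty(\Omega)$ with $\varphi_n\to g$ and $S^{-\alpha}\varphi_n\to g_*$ in $L^2(\Omega)$. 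I would use two further consequences of $\alpha\in I_S$. (a) $\mathbbm{1}_K\in R(S^\alpha)$ for every compact $K\subset\Omega$, and hence every compactly supported bounded measurable function lies in $R(S^\alpha)$: one approximates $\mathbbm{1}_K$ monotonically by nonnegative functions in $C_0^\infty(\Omega)$ (resp.\ by $\mathbbm{1}_{K'}$, $K'$ compact), applies the positivity-preserving, closed, self-adjoint operator $S^{-\alpha}$ together with monotone and dominated convergence, and passes to the limit. (b) Writing $S^{-\alpha}=\frac{1}{\Gamma(\alpha)}\int_0^\infty t^{\alpha-1}e^{-tS}\,dt$ and using positivity of the heat kernel $p_S$, the operator $S^{-\alpha}$ acts (on its domain, and on any nonnegative function if the value $+\infty$ is permitted) by integration against the positive kernel $\Phi_{S,\alpha}(x,y)=\frac{1}{\Gamma(\alpha)}\int_0^\infty t^{\alpha-1}p_S(x,y,t)\,dt$, the fractional analogue of the Green function in \eqref{intro:kernel-green}, which is locally integrable precisely because $\alpha\in I_S$.

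The approximation is then obtained through three reductions. \emph{(i) Regularization.} Since $e^{-tS}$ commutes with $S^{-\alpha}$, is strongly continuous on $L^2(\Omega)$, and maps $R(S^\alpha)$ into itself, $e^{-tS}g\to g$ in $R(S^\alpha)$ as $t\downarrow0$; moreover $e^{-tS}g\in\bigcap_n D(S^n)$, so by interior elliptic regularity $e^{-tS}g\in W^{2,p}_{\mathrm{loc}}(\Omega)$ for every $p<\infty$ and is in particular locally bounded. Thus we may assume $g\in R(S^\alpha)$ is locally bounded. \emph{(ii) Truncation.} With cut-offs $\chi_R\in C_0^\infty(\Omega)$, $0\le\chi_R\le1$, $\chi_R\uparrow1$ pointwise on $\Omega$, set $g_R:=\chi_R g\in L^\infty(\Omega)$ with compact support; then $g_R\to g$ in $L^2(\Omega)$, and everything reduces to $S^{-\alpha}(g-g_R)\to0$ in $L^2(\Omega)$. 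Since $|g-g_R|=(1-\chi_R)|g|\downarrow0$ pointwise, positivity of $\Phi_{S,\alpha}$ gives $|S^{-\alpha}(g-g_R)|\le\int_\Omega\Phi_{S,\alpha}(\cdot,y)\big(1-\chi_R(y)\big)|g(y)|\,dy$, and two applications of dominated convergence (first in the inner integral, then in $x$) give the claim, \emph{provided} the majorant $S^{-\alpha}|g|=\int_\Omega\Phi_{S,\alpha}(\cdot,y)|g(y)|\,dy$ is finite a.e.\ and belongs to $L^2(\Omega)$, i.e.\ provided $|g|\in R(S^\alpha)$. \emph{(iii) Mollification.} For the compactly supported bounded function $g_R$, a mollification $\varphi_\delta=\rho_\delta*g_R\in C_0^\infty(\Omega)$ satisfies $\varphi_\delta\to g_R$ in $L^2(\Omega)$ with $|\varphi_\delta-g_R|\le C\,\mathbbm{1}_K$ on a fixed compact $K$, so that $|S^{-\alpha}(\varphi_\delta-g_R)|\le\int_\Omega\Phi_{S,\alpha}(\cdot,y)|\varphi_\delta(y)-g_R(y)|\,dy\to0$ a.e.\ and is dominated by $C\,S^{-\alpha}\mathbbm{1}_K\in L^2(\Omega)$; hence $S^{-\alpha}\varphi_\delta\to S^{-\alpha}g_R$ in $L^2(\Omega)$. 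Chaining (i)--(iii) yields the required $\varphi_n$.

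The only nonroutine point is the proviso in step (ii): one must know that $R(S^\alpha)$ is stable under $g\mapsto|g|$ — equivalently, that the integral representation $S^{-\alpha}g=\int_\Omega\Phi_{S,\alpha}(\cdot,y)g(y)\,dy$ is absolutely convergent for every $g\in R(S^\alpha)=D(S^{-\alpha})$, so that $R(S^\alpha)$ is a Hilbert sublattice of $L^2(\Omega)$. This is where the hypothesis $\alpha\in I_S$ enters decisively, and it is the step I expect to be the main obstacle. Heuristically, for $S_0=-\Delta$ it is the Hardy--Littlewood--Sobolev-type fact that $(-\Delta)^{-\alpha}$ maps $L^2(\R^N)$ into $L^2_{\mathrm{loc}}(\R^N)$ exactly when $\alpha<\tfrac{N}{4}=\sup I_{S_0}$, and the general statement is precisely the sort of structural property that the quantity $I_S$ is designed to encode. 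I would isolate it as a lemma: using the positive kernel $\Phi_{S,\alpha}$, the inclusion $\mathbbm{1}_K\in D(S^{-\alpha})$ from (a), monotone approximation, and closedness of $S^{-\alpha}$, one first obtains the integral representation for $g\ge0$; the genuine work is to upgrade it, for signed (or complex) $g\in D(S^{-\alpha})$, to $\int_\Omega\Phi_{S,\alpha}(\cdot,y)|g(y)|\,dy\in L^2(\Omega)$, which must exploit the decay built into membership in $D(S^{-\alpha})$ together with the critical-exponent condition $\alpha\in I_S$. Granting this lemma, one writes $g=g_1-g_2+i(g_3-g_4)$ with $g_j\in R(S^\alpha)$ nonnegative, so the majorant $S^{-\alpha}|g|\in L^2(\Omega)$ in step (ii) is legitimate, and (i)--(iii) complete the proof; the same scheme, with the relevant heat-kernel estimates, covers the exterior-domain and punctured-space settings as well.
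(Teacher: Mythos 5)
Your skeleton --- regularize with the heat semigroup, truncate, mollify, exploiting positivity of the kernel $\Phi_{S,\alpha}$ and the fact that $\mathbbm{1}_K\in R(S^{\alpha})$ for compact $K$ --- is the same as the paper's. But the point you yourself flag as ``the main obstacle'' is a genuine gap, and it is not repairable in the form you propose. Your step (ii) needs the claim that $g\in R(S^{\alpha})$ implies $|g|\in R(S^{\alpha})$ (equivalently, absolute convergence of the kernel representation of $S^{-\alpha}g$), and your final reduction writes $g=g_1-g_2+i(g_3-g_4)$ with nonnegative $g_j\in R(S^{\alpha})$; since $\Phi_{S,\alpha}\geq 0$ such a decomposition would force $|g|\leq g_1+g_2+g_3+g_4\in R(S^{\alpha})$ and hence $|g|\in R(S^{\alpha})$ by domination. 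This lattice property is false in general. Take $S_0=-\frac{d^2}{dx^2}$ on $\R$ (so $I_{S_0}=(0,\frac14)$ and $R(S_0^{\alpha})=\{g\in L^2: |\xi|^{-2\alpha}\widehat g\in L^2\}$), let $\alpha=\frac18$, $h(x)=(1+|x|)^{-\beta}$ with $\frac12<\beta\leq\frac34$, and $g=h\cos x$. Then $\widehat g=\frac12(\widehat h(\cdot-1)+\widehat h(\cdot+1))$ is bounded near $\xi=0$, and one checks $|\xi|^{-1/4}\widehat g\in L^2$, so $g\in R(S_0^{1/8})$; but $|g|=h|\cos x|$ satisfies $\widehat{|g|}(\xi)=\frac{2}{\pi}\widehat h(\xi)+O(1)\sim c\,|\xi|^{\beta-1}$ as $\xi\to 0$, whence $|\xi|^{-1/4}\widehat{|g|}\notin L^2$ and $|g|\notin R(S_0^{1/8})$. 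Heat regularization does not remove the obstruction, since $e^{-tS_0}g$ carries the same oscillation. So the lemma on which your argument hinges fails, and step (ii) cannot be carried out for such $g$.

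The paper sidesteps this entirely by never letting $S^{-\alpha}$ act as a pointwise integral operator on signed functions. For \emph{nonnegative} $f\in R(S^{\alpha})$ it establishes the quadratic-form identity $\renorm{f}_{R(S^{\alpha})}^2=4^{-\alpha}\iint_{\Omega\times\Omega}\Phi_{S,\alpha}(x,y)f(x)f(y)\,dx\,dy$ by Tonelli and monotone convergence --- no absolute-convergence hypothesis is needed because every factor is nonnegative --- and then performs the truncation at the level of this form: the error is $\renorm{\mathbbm{1}_{\Omega\setminus\Omega_n}f}_{R(S^{\alpha})}^2$, which tends to $0$ by dominated convergence against the finite measure $\Phi_{S,\alpha}(x,y)f(x)f(y)\,dx\,dy$. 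Mollification is then applied to bounded compactly supported functions, where your own step (iii) argument (domination by $C\,\mathbbm{1}_K\in R(S^{\alpha})$) is sound. To be fair, the paper's written proof also treats only nonnegative $f$ and leaves the passage to general $f$ implicit; but the specific bridge you propose for the signed case is exactly the statement that demonstrably fails, so as written your proposal does not prove the theorem.
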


From the observation with $I_S$ as above, 
we shall consider the following 
initial-boundary value problem of 
linear wave equations with potentials of the form
\begin{equation}\label{intro:eq:hyper}
\begin{cases}
\pa_t^2w(x,t)-\Delta w(x,t)+V(x)w(x,t)=0 &\text{in}\ \Omega\times (0,\infty),
\\
w(x,t)=0&\text{on}\ \pa\Omega\times(0,\infty),
\\
(w,\pa_tw)(x,0)=(u_0(x),u_1(x)) &\text{in}\ \Omega. 
\end{cases}
\end{equation}
For the classical wave equation ($V\equiv 0$) in $\R^N$, we refer the book of Racke \cite{Book_Racke}.
We recall that $\Omega$ is connected and $V\in C(\Omega)$ satisfies \eqref{condition:nonnegative}.  
We use the notation $W_S(\cdot)g$ as the solution $u(\cdot)$ of \eqref{intro:eq:hyper} with the initial data $(u_0,u_1)=(0,g)$. Since the energy conservation law
\[
\|\pa_tw(t)\|_{L^2(\Omega)}^2+\|S^{1/2}w(t)\|_{L^2(\Omega)}^2
= \|w_1\|_{L^2(\Omega)}^2+\|S^{1/2}w_0\|_{L^2(\Omega)}^2, \quad t\geq 0
\]
holds, the $L^2$-bound $\|W_S(t)g\|_{L^2(\Omega)}\leq t\|g\|_{L^2(\Omega)}$ $(t\geq 0)$
is valid in general.

The following theorem describes 
a typical property of solutions to 
\eqref{intro:eq:hyper} via the interval $I_S$. 
We emphasize that a kind of the large-time behavior
of solutions to the hyperbolic problem \eqref{intro:eq:hyper} 
can be observed by using a several estimate for the parabolic problem \eqref{intro:eq:para}.

\begin{theorem}\label{thm:wave-bounded}
Let $S$ be the Friedrichs extention of $S_{\min}$ satisfying \eqref{condition:nonnegative}. 
If $\alpha\in I_S\cap (0,\frac{1}{2}]$, then for every 
$g\in C_0^\infty(\Omega)$, 
the solution $W_S(\cdot)g$ satisfies $t^{2\alpha-1}W_S(\cdot)g\in L^\infty(0,\infty;L^2(\Omega))$.
\end{theorem}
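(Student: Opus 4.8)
The plan is to connect the hyperbolic flow $W_S(t)$ to the parabolic flow $e^{-tS}$ via the classical subordination/transmutation formula, and then to exploit the hypothesis $\alpha\in I_S$ through Theorem \ref{thm:equivalence}(ii). Concretely, since $S\geq 0$ is selfadjoint, one has the well-known identity $W_S(t)g=\frac{\sin(t\sqrt{S})}{\sqrt S}g$, and $\frac{\sin(t\lambda^{1/2})}{\lambda^{1/2}}$ can be written as an integral against the Gaussian kernel: there is a probability-type kernel $k_t(s)$ with $\frac{\sin(t\sqrt\lambda)}{\sqrt\lambda}=\int_0^\infty k_t(s)e^{-s\lambda}\,ds$ only in a formal (signed-measure) sense, so it is cleaner to use the cosine-transmutation $\cos(t\sqrt\lambda)=\frac{t}{2\sqrt\pi}\int_0^\infty s^{-3/2}e^{-t^2/(4s)}e^{-s\lambda}\,ds$ together with an integration in $t$, or to work directly with the spectral representation $W_S(t)g=\int_0^\infty \lambda^{-1/2}\sin(t\lambda^{1/2})\,dE_\lambda g$. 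The key point I want to extract is that $\|W_S(t)g\|_{L^2}$ can be dominated by a quantity of the form $\int_0^\infty \varphi_\alpha(t,s)\,\|S^\alpha e^{-sS}h\|_{L^2}\,ds$ whenever $g=S^\alpha h$ with $h\in L^2$, which is available precisely because $\alpha\in I_S$ forces $C_0^\infty(\Omega)\subset R(S^\alpha)$, so every $g\in C_0^\infty(\Omega)$ has such a representation $g=S^\alpha h$, $h\in D(S^\alpha)$.

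The next step is the decay estimate for the parabolic semigroup. Using the spectral theorem, $\|S^\alpha e^{-sS}\|_{L^2\to L^2}=\sup_{\lambda\geq 0}\lambda^\alpha e^{-s\lambda}=(\alpha/e)^\alpha s^{-\alpha}$, so $\|S^\alpha e^{-sS}h\|_{L^2}\lesssim s^{-\alpha}\|h\|_{L^2}$ for $s>0$. Combined with the bound $\|W_S(t)\|_{L^2\to L^2}\leq t$ — or more precisely $\|\lambda^{-1/2}\sin(t\lambda^{1/2})\|_{L^\infty(\lambda>0)}=t$ — I expect the estimate to come out by splitting the spectral integral (or the subordination integral in $s$) at a threshold depending on $t$: on the low-frequency part $\lambda\lesssim t^{-2}$ one uses $|\lambda^{-1/2}\sin(t\lambda^{1/2})|\leq t$ and $\lambda^\alpha\leq t^{-2\alpha}$, while on the high-frequency part $\lambda\gtrsim t^{-2}$ one uses $|\lambda^{-1/2}\sin(t\lambda^{1/2})|\leq \lambda^{-1/2}$ together with $\lambda^{-\alpha}\lesssim$ something integrable against $dE_\lambda h$ — here $\alpha\leq\frac12$ is what makes $\lambda^{-1/2}\cdot\lambda^{-\alpha}$ controllable, since then $\lambda^{-\alpha-1/2}\leq (t^{-2})^{-\alpha-1/2}\cdot$ wait, on $\lambda\geq t^{-2}$ one has $\lambda^{-\alpha-1/2}\leq t^{2\alpha+1}$... no: $\lambda^{-\beta}$ is decreasing so on $\lambda\geq t^{-2}$, $\lambda^{-\beta}\leq (t^{-2})^{-\beta}=t^{2\beta}$; thus with $\beta=\alpha+\frac12$ this is $t^{2\alpha+1}$, which is too large. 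So instead I would keep $\|h\|_{L^2}=\|E(\mathbb R)h\|$ and simply bound $\lambda^{-1/2}\sin(t\lambda^{1/2})\cdot\lambda^\alpha\leq \lambda^{\alpha-1/2}\leq t^{1-2\alpha}$ on $\lambda\geq t^{-2}$ using $\alpha\leq\frac12$, and on $\lambda\leq t^{-2}$ bound the same symbol by $t\cdot\lambda^{\alpha}\leq t\cdot t^{-2\alpha}=t^{1-2\alpha}$. Either way the symbol $\big|\lambda^{\alpha-1/2}\sin(t\lambda^{1/2})\big|$ is bounded uniformly by $C\,t^{1-2\alpha}$ for all $\lambda>0$ and $t>0$, since $|\lambda^{\alpha-1/2}\sin(t\lambda^{1/2})|\leq \min\{t\lambda^\alpha,\lambda^{\alpha-1/2}\}$ and optimizing the two bounds gives exactly $t^{1-2\alpha}$ up to a constant. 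Therefore $\|W_S(t)g\|_{L^2}=\|\lambda^{\alpha-1/2}\sin(t\lambda^{1/2})\cdot(\lambda^{-\alpha}\widehat{g})\|\leq C t^{1-2\alpha}\|h\|_{L^2}$ where $g=S^\alpha h$, which is exactly $t^{2\alpha-1}W_S(t)g\in L^\infty(0,\infty;L^2)$.

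The main obstacle I anticipate is not the spectral multiplier estimate — that is the elementary $\min$-optimization above — but rather justifying the representation $g=S^\alpha h$ with $h\in L^2(\Omega)$ and making sense of $W_S(t)g$ in terms of $h$. Theorem \ref{thm:equivalence} gives $C_0^\infty(\Omega)\subset R(S^\alpha)$ under $\alpha\in I_S$, so for $g\in C_0^\infty(\Omega)$ there exists $h\in D(S^\alpha)\subset L^2(\Omega)$ with $S^\alpha h=g$; if $S$ has a zero-eigenvalue one must be slightly careful that $h$ lies in the closure of the range (equivalently, choose $h\perp\ker S$), but this does not affect the estimate. One then needs that $W_S(t)$ commutes with $S^{-\alpha}$ on the relevant subspace and that $W_S(t)g=W_S(t)S^\alpha h$ coincides with applying the multiplier $\lambda^{\alpha-1/2}\sin(t\lambda^{1/2})$ to $h$ in the functional calculus — this is routine since all operators are functions of $S$ via the spectral theorem, but it should be spelled out, together with the remark that for $t$ bounded away from $0$ the trivial bound $\|W_S(t)g\|_{L^2}\leq t\|g\|_{L^2}$ already handles large $t$ only up to the loss we are trying to beat, so the uniform-in-$t$ symbol bound is genuinely needed across the whole range $t\in(0,\infty)$.
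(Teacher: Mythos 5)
Your proof is correct, and it reaches the conclusion by a genuinely different mechanism from the paper's. Both arguments begin the same way: $\alpha\in I_S$ together with Theorem \ref{thm:equivalence} gives $g=S^{\alpha}h$ for some $h\in L^2(\Omega)$. From there the paper never writes down the wave multiplier explicitly; it works in the Hilbert space $R(S^{\alpha})$ equipped with the norm $\renorm{\cdot}_{R(S^{\alpha})}$ of Lemma \ref{lem:range:complete}, uses energy conservation for the realization of $S$ in that space to get $\renorm{W_S(t)g}_{R(S^{\alpha})}\leq t\renorm{g}_{R(S^{\alpha})}$, and then converts this into an $L^2$-bound through the interpolation inequality of Lemma \ref{lem:embedding1}, $\|x\|_{L^2}\lesssim \renorm{S^{1/2}x}_{R(S^{\alpha})}^{2\alpha}\renorm{x}_{R(S^{\alpha})}^{1-2\alpha}$ (this is Proposition \ref{prop:abstractwave}). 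You instead identify $W_S(t)g=m_t(S)h$ with $m_t(\lambda)=\lambda^{\alpha-1/2}\sin(t\lambda^{1/2})$ and observe $|m_t(\lambda)|\leq\min\{t\lambda^{\alpha},\lambda^{\alpha-1/2}\}\leq t^{1-2\alpha}$ for $0<\alpha\leq\tfrac12$, which immediately gives $\|W_S(t)g\|_{L^2}\leq t^{1-2\alpha}\|h\|_{L^2}$; the symbol bound is sharp and the functional-calculus identifications you flag are indeed routine for a selfadjoint $S$. Your route is shorter and more elementary. What the paper's route buys is, first, a bound expressed in the quantity $\renorm{g}_{R(S^{\alpha})}$, the central object of the paper, which ties directly to the heat-kernel characterization of $I_S$ (under the standing assumption $0\notin\sigma_{\rm p}(S)$ the two bounds coincide up to an explicit constant, since $\renorm{S^{\alpha}h}_{R(S^{\alpha})}=2^{-\alpha}\Gamma(2\alpha)^{1/2}\|h\|_{L^2}$ by the computation in Lemma \ref{lem:range:complete}), and, second, an argument that uses only the energy identity and the analytic-semigroup bound $\|S^{1/2}e^{-tS}\|\lesssim t^{-1/2}$ rather than the full spectral resolution of the wave group. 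Incidentally, the heat--wave subordination idea you raised and then discarded is essentially what the paper does use, in the reverse direction, to prove Theorem \ref{thm:opposite} via Lemma \ref{lem:Hadamard}; you were right that it is not the cleanest tool for the forward implication proved here.
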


\begin{remark}\label{rem:wave-bound}
In the case $S_0=-\Delta_{\R^N}$, 
it is not difficult to deduce 
the following estimate 
via the representation of solutions 
$\mathcal{F}[W_{S_0}(t)g](\xi)=\frac{\sin(t|\xi|)}{|\xi|}\mathcal{F}g(\xi)$
($\mathcal{F}$ is the Fourier transform)
and the 
Hardy--Littlewood-Sobolev inequality:
for every $q\in (1,2)\cap [\frac{2N}{N+2},2)$, 
then there exists a positive constant $C$ such that 
for every $g\in L^2(\R^N)\cap L^{q}(\R^N)$, 
\[
t^{\frac{N}{q}-\frac{N}{2}-1} 
\|W_{S_0}(t)g\|_{L^2(\R^N)}
\leq 
C\|g\|_{L^q(\R^N)}, \quad t>0.
\]
\end{remark}

The following assertion for 
subcritical Schr\"odinger operators is 
deduced as a corollary of Theorem \ref{thm:wave-bounded}. 
\begin{corollary}\label{cor:subcritical--bounded}
Let $S$ be the Friedrichs extention of $S_{\min}$ satisfying \eqref{condition:nonnegative}. 
If $S$ is subcritical, 
then for every 
$g\in C_0^\infty(\Omega)$, 
the solution $W_S(\cdot)g$ is uniformly bounded in $L^2(\Omega)$.
\end{corollary}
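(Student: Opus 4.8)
The plan is to derive Corollary~\ref{cor:subcritical--bounded} directly from Theorem~\ref{thm:wave-bounded} by observing that subcriticality is exactly the statement $\frac{1}{2}\in I_S$. Indeed, by the last sentence of Theorem~\ref{thm:equivalence}, the conditions (i)--(ii) there with $\alpha=\frac12$ hold if and only if $S$ is subcritical; unwinding the definition of $I_S$, the condition $R(S^{1/2})\cap[L^2(\Omega)]_+\neq\{0\}$ is precisely $\frac12\in I_S$. Hence subcriticality of $S$ gives $\frac12\in I_S$, and therefore $\frac12\in I_S\cap(0,\frac12]$.

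Once we know $\alpha=\frac12$ is an admissible exponent in Theorem~\ref{thm:wave-bounded}, we simply apply that theorem with this choice of $\alpha$. It yields, for every $g\in C_0^\infty(\Omega)$, that $t^{2\alpha-1}W_S(\cdot)g=t^{0}W_S(\cdot)g=W_S(\cdot)g\in L^\infty(0,\infty;L^2(\Omega))$. In other words, $\sup_{t\geq0}\|W_S(t)g\|_{L^2(\Omega)}<\infty$, which is exactly the asserted uniform boundedness in $L^2(\Omega)$ of the solution $W_S(\cdot)g$. This completes the argument.

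I do not expect any genuine obstacle here: the corollary is a one-line specialization once the identification ``$S$ subcritical $\iff \frac12\in I_S$'' is made explicit. The only point deserving a word of care is that $I_S$ is defined via the strict inequality $\alpha>0$ and the set $(0,\frac12]$ in Theorem~\ref{thm:wave-bounded} is half-open at $\frac12$; but since we are using the right endpoint $\alpha=\frac12$ itself, which \emph{is} included in $(0,\frac12]$, no issue arises. (If one wished, one could alternatively invoke the interval structure of $I_S$ suggested after Theorem~\ref{thm:equivalence} to note that $\frac12\in I_S$ forces $(0,\frac12]\subset I_S$, but this is not needed.) Thus the proof reduces to: subcriticality $\Rightarrow \frac12\in I_S\cap(0,\frac12]\Rightarrow$ apply Theorem~\ref{thm:wave-bounded} with $\alpha=\frac12$ $\Rightarrow$ $W_S(\cdot)g\in L^\infty(0,\infty;L^2(\Omega))$.
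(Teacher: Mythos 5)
Your proposal is correct and is exactly the argument the paper intends: Theorem~\ref{thm:equivalence} identifies subcriticality with $\tfrac12\in I_S$, and Theorem~\ref{thm:wave-bounded} applied with $\alpha=\tfrac12$ gives the uniform $L^2$-bound. The paper states the corollary as an immediate consequence of Theorem~\ref{thm:wave-bounded} without further elaboration, so your one-line specialization matches its route precisely.
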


\begin{remark}
In Morawetz \cite{Morawetz}, 
she had provided a way to estimate the $L^2$-norm of solutions 
to the three-dimensional wave equation in exterior domain 
by solving the Poisson equation involving initial data. 
The application of the interval $I_S$ 
to the wave equation in this paper 
may be regarded as one of generalization of her mothod mentioned above. 
\end{remark}
\begin{remark}
It is mentioned in Ikehata \cite{Ikehata2023} that 
if $g\in L^2(\R^N)\cap L^1(\R^N)$, then 
the solution $W_{S_0}(\cdot)g$ ($S_0=-\Delta_{\R^N}$) of 
the classical wave equation in $\R^N$ 
satisfies 
\[
\|W_{S_0}(t)g\|_{L^2(\R^N)}
\lesssim 
\begin{cases}
(1+t)^{1/2}
&\text{if}\ N=1\ 
\text{and}\ \int_{\R}g\,dx\neq 0,
\\
(1+\log (1+t))^{1/2}
&\text{if}\ N=2\ 
\text{and}\ \int_{\R^2}g\,dx\neq 0,
\\
1&\text{if}\ N\geq 3\ \text{or}\ \int_{\R^N}g\,dx=0
\end{cases} 
\]
and the above estimates are optimal. 
Boundedness of solutions to one-dimensional wave equations with (nontrivial) nonnegative potentials 
was also discussed in Ikehata \cite{Ikehata2024}.  
The same conclution for 
two-dimensional wave equations
with (nontrivial) nonnegative potentials
can be also shown by Corollary \ref{cor:subcritical--bounded}. 
Although precise asymptotics of solutions to \eqref{intro:eq:hyper} 
is not treated in Theorem \ref{thm:wave-bounded}, 
we could observe a general structure of wave equations with potentials through the interval $I_S$. 
\end{remark}

We also give a comment on the validity of the opposite assertion of Theorem \ref{thm:wave-bounded}. 
\begin{theorem}\label{thm:opposite}
Let $S$ be the Friedrichs extention of $S_{\min}$ satisfying \eqref{condition:nonnegative}. 
Assume that 
there exists a constant 
$\alpha_*\in (0,\frac{1}{2}]$ such that 
for every $g\in C_0^\infty(\Omega)$, 
the solution $W_S(\cdot)g$ 
satisfies $t^{2\alpha_*-1}W_S(\cdot)g\in L^\infty(0,\infty;L^2(\Omega))$.
Then $(0,\alpha_*)\subset I_S$. 
\end{theorem}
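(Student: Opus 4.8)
The plan is to show that the stated hypothesis forces $C_0^\infty(\Omega)\subset R(S^\alpha)$ for every $\alpha\in(0,\alpha_*)$; by the implication (ii)$\Rightarrow$(i) of Theorem~\ref{thm:equivalence} this gives $\alpha\in I_S$ for each such $\alpha$, i.e.\ $(0,\alpha_*)\subset I_S$. Let $\{E_\lambda\}$ denote the spectral resolution of the nonnegative selfadjoint operator $S$, let $P_0$ be the orthogonal projection of $L^2(\Omega)$ onto $\ker S$, and fix $g\in C_0^\infty(\Omega)$. By the functional calculus the solution operator of \eqref{intro:eq:hyper} with data $(0,g)$ is $W_S(t)=\sin(t\sqrt S)/\sqrt S$, so that
\[
\|W_S(t)g\|_{L^2(\Omega)}^2=t^2\|P_0g\|_{L^2(\Omega)}^2+\int_{(0,\infty)}\frac{\sin^2(t\sqrt\lambda)}{\lambda}\,d\|E_\lambda g\|_{L^2(\Omega)}^2 .
\]
I will use the standard spectral fact that $g\in R(S^\alpha)$ if and only if $P_0g=0$ and $\int_{(0,\infty)}\lambda^{-2\alpha}\,d\|E_\lambda g\|_{L^2(\Omega)}^2<\infty$.

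The kernel contribution is removed first: from the displayed identity $\|W_S(t)g\|_{L^2(\Omega)}\ge t\|P_0g\|_{L^2(\Omega)}$, so the hypothesis $\|W_S(t)g\|_{L^2(\Omega)}\le Ct^{1-2\alpha_*}$ yields $\|P_0g\|_{L^2(\Omega)}\le Ct^{-2\alpha_*}$ for all $t>0$, hence $P_0g=0$ on letting $t\to\infty$. Thus $W_S(t)$ acts on $g$ purely through the oscillatory spectral part.

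The core is a weighted integration in $t$. Fix $\alpha\in(0,\alpha_*)$ and set $\gamma=1-2\alpha\in(0,1)$. The elementary identity $\int_0^\infty t^{-1-2\gamma}\sin^2(ta)\,dt=c_\gamma\,a^{2\gamma}$ for $a>0$, with $c_\gamma=\int_0^\infty s^{-1-2\gamma}\sin^2 s\,ds\in(0,\infty)$ (finite precisely because $\gamma\in(0,1)$), combined with Tonelli's theorem applied to the nonnegative integrand, gives
\[
\int_0^\infty t^{-1-2\gamma}\|W_S(t)g\|_{L^2(\Omega)}^2\,dt=c_\gamma\int_{(0,\infty)}\lambda^{-2\alpha}\,d\|E_\lambda g\|_{L^2(\Omega)}^2 .
\]
It remains to bound the left-hand side. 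On $(0,1)$ the universal energy bound $\|W_S(t)g\|_{L^2(\Omega)}\le t\|g\|_{L^2(\Omega)}$ makes the integrand $\lesssim t^{1-2\gamma}$, integrable near $0$ since $\gamma<1$. On $(1,\infty)$ the hypothesis makes the integrand $\lesssim t^{-1-2\gamma+2(1-2\alpha_*)}=t^{-1-4(\alpha_*-\alpha)}$, integrable at infinity since $\alpha<\alpha_*$. Hence $\int_{(0,\infty)}\lambda^{-2\alpha}\,d\|E_\lambda g\|_{L^2(\Omega)}^2<\infty$, and together with $P_0g=0$ this gives $g\in R(S^\alpha)$. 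As $g\in C_0^\infty(\Omega)$ was arbitrary, condition (ii) of Theorem~\ref{thm:equivalence} holds, so $\alpha\in I_S$, which finishes the argument.

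I expect the only genuinely delicate points to be the bookkeeping of convergence at the two endpoints of the time integral and, a little more, the choice of the exponent $\gamma=1-2\alpha$ so that the Tonelli computation reproduces exactly the spectral weight $\lambda^{-2\alpha}$ characterizing $R(S^\alpha)$; the removal of the kernel contribution and the interchange of integrals (justified by nonnegativity) should be routine.
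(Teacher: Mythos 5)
Your proof is correct, but it takes a genuinely different route from the paper's. The paper converts the hypothesis $\|W_S(t)g\|_{L^2}\lesssim t^{1-2\alpha_*}$ into a heat-semigroup decay estimate $\|e^{-tS}g\|_{L^2}\lesssim t^{-\alpha_*}$ by means of the transmutation formula of Lemma~\ref{lem:Hadamard} (the subordination identity $e^{-tS}g=\tfrac12\pi^{-1/2}t^{-3/2}\int_0^\infty\sigma e^{-\sigma^2/(4t)}W_S(\sigma)g\,d\sigma$), and then concludes $\renorm{g}_{R(S^\alpha)}<\infty$ for $\alpha<\alpha_*$ by splitting the integral $\int_0^\infty t^{2\alpha-1}\|e^{-tS}g\|^2\,dt$ at $t=1$ and invoking Lemma~\ref{lem:characterization-range}. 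You instead work directly on the wave propagator $\sin(t\sqrt S)/\sqrt S$ with the spectral theorem, using the scaling identity $\int_0^\infty t^{-1-2\gamma}\sin^2(ta)\,dt=c_\gamma a^{2\gamma}$ with $\gamma=1-2\alpha$ and Tonelli to recover exactly the spectral weight $\lambda^{-2\alpha}$; your endpoint bookkeeping (energy bound near $t=0$, hypothesis near $t=\infty$) is the precise analogue of the paper's splitting, and your spectral criterion for membership in $R(S^\alpha)$ is just Lemma~\ref{lem:characterization-range} in disguise, since $\int_0^\infty t^{2\alpha-1}\|e^{-tS}g\|^2\,dt=4^{-\alpha}\Gamma(2\alpha)\int_{(0,\infty)}\lambda^{-2\alpha}\,d\|E_\lambda g\|^2$ when $P_0g=0$. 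What each approach buys: the paper's route reuses machinery already built (Lemmas~\ref{lem:characterization-range} and~\ref{lem:Hadamard}) and produces the intermediate parabolic decay estimate, which fits the paper's heat--wave theme; yours is more self-contained (no transmutation formula needed), and it has the minor virtue of treating a possible zero eigenvalue explicitly via $P_0$, a case the paper's abstract Section~\ref{sec:general} excludes by assumption. Both arguments lose the endpoint $\alpha=\alpha_*$ for the same reason, a borderline divergence of the time integral at infinity.
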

\begin{remark}
The one-dimensional operator $S_0=-\frac{d^2}{dx^2}$ in $\R$ 
satisfies $I_{S_0}=(0,\frac{1}{4})$. 
We can also directly calculate $t^{-\frac{1}{2}}W_{S_0}(\cdot)g\in L^\infty(0,\infty;L^2(\Omega))$ for every 
$g\in C_0^\infty(\R)$ by using the d'Alembert formula.
This is the case where $t^{2\alpha-1}W_{S_0}(\cdot)g\in L^\infty(0,\infty;L^2(\R))$ is valid 
with $\alpha=\frac{1}{4}$ but $\frac{1}{4}\notin I_{S_0}$.

We do not know whether there exists 
a critical Schr\"odinger operator $S$ (that is, $\frac{1}{2}\notin I_S$)
such that 
all solutions $W_S(\cdot)g$ ($g\in C_0^\infty(\Omega)$)
of the corresponding wave equation \eqref{intro:eq:hyper}
are bounded;
in this case we must have $I_S=(0,\frac{1}{2})$. 
\end{remark}

We also give a comment on Schr\"odinger operators with inverse-square potentials. 
We consider the case of $S=-\Delta+\frac{\lambda}{|x|^2}$ in 
the punctured space
$\Omega=\R^N\setminus \{0\}$ ($N\geq2$). Namely, 
we discuss solutions of wave equations with inverse-square potentials:
\begin{equation}\label{intro:wave-hardy-potential}
\begin{cases}
\pa_t^2w(x,t)-\Delta w(x,t)+\dfrac{\lambda}{|x|^2}w(x,t)=0, & \text{in}\ (\R^N\setminus\{0\})\times (0,\infty), 
\quad t>0
\\
(u,u')(x,0)=(0,g(x)),
\end{cases}
\end{equation}
where $\lambda\geq\lambda_*$ (see Example \ref{exam:hardy-potential}). 
It should be noticed that selfadjoint extensions of $S_{\min}$ 
are not unique when $\lambda<\lambda_*+1$ 
but we can define the Friedrichs extension $S_\lambda$. This means that 
we implicitly impose a boundary condition at the singular point $x=0$.
As a consequence of Corollary \ref{cor:subcritical--bounded}, we have 
\begin{corollary}\label{cor:hardy-subcritical}
Let $N\geq 2$, $\lambda >\lambda_*$ 
and let $S_\lambda$ be the Friedrichs extension of $-\Delta+\frac{\lambda}{|x|^2}$ 
in $L^2(\R^N)$.
Then for every $g\in C_0^\infty(\R^N\setminus\{0\})$, 
the solution $W_{S_\lambda}(\cdot)g$ is uniformly bounded in $L^2(\R^N)$.  
\end{corollary}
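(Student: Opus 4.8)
The plan is to derive Corollary \ref{cor:hardy-subcritical} directly from Corollary \ref{cor:subcritical--bounded}. The only thing to check is that the Friedrichs extension $S_\lambda$ of $-\Delta+\frac{\lambda}{|x|^2}$ on $\Omega=\R^N\setminus\{0\}$ is subcritical for every $\lambda>\lambda_*$ (the assertion recorded in Example \ref{exam:hardy-potential}); once that is known, the hypotheses of Corollary \ref{cor:subcritical--bounded} are all in force, since for $N\geq 2$ the punctured space $\Omega$ is connected, $V(x)=\lambda|x|^{-2}$ is continuous on $\Omega$, condition \eqref{condition:nonnegative} holds (by Hardy's inequality with constant $C_{\rm H}=(\frac{N-2}{2})^2=-\lambda_*$ when $N\geq 3$, and trivially when $N=2$, where $\lambda_*=0$), $L^2(\R^N)=L^2(\Omega)$, and $C_0^\infty(\R^N\setminus\{0\})=C_0^\infty(\Omega)$. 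Moreover the use of the Friedrichs extension is exactly the selfadjoint realization appearing in both Definition \ref{def:criticailty-Schroedinger} and Corollary \ref{cor:subcritical--bounded}, which is consistent even when the selfadjoint extension of $S_{\min}$ is not unique. Thus Corollary \ref{cor:subcritical--bounded} will immediately give the uniform $L^2(\R^N)$-boundedness of $W_{S_\lambda}(\cdot)g$ for all $g\in C_0^\infty(\R^N\setminus\{0\})$.

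To verify subcriticality I would first bound the quadratic form below by an inverse-square weight. If $\lambda\geq 0$ (in particular for all admissible $\lambda$ when $N=2$) one simply drops the gradient term, while if $N\geq 3$ and $\lambda_*<\lambda<0$ one absorbs the negative potential using Hardy's inequality:
\begin{align*}
\int_\Omega\Big(|\nabla u|^2+\frac{\lambda}{|x|^2}|u|^2\Big)dx
&=\int_\Omega\Big(|\nabla u|^2-\frac{C_{\rm H}}{|x|^2}|u|^2\Big)dx+(\lambda-\lambda_*)\int_\Omega\frac{|u|^2}{|x|^2}\,dx\\
&\geq (\lambda-\lambda_*)\int_\Omega\frac{|u|^2}{|x|^2}\,dx ,
\end{align*}
the first term being nonnegative and $\lambda-\lambda_*>0$. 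In either case there is $c(\lambda)>0$ such that $\int_\Omega(|\nabla u|^2+\lambda|x|^{-2}|u|^2)\,dx\geq c(\lambda)\int_\Omega|x|^{-2}|u|^2\,dx$ for all $u\in C_0^\infty(\Omega)$.

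The remaining, and essentially only, step is to pass from this inverse-square weight to an arbitrary nonnegative $W\in C_0^\infty(\Omega)$, as demanded by Definition \ref{def:criticailty-Schroedinger}(i). The point is that $\supp W$ is a compact subset of $\R^N\setminus\{0\}$, say contained in $\{x\in\R^N : r_0\leq |x|\leq R_0\}$ with $0<r_0<R_0<\infty$, whence $W(x)\leq \|W\|_{L^\infty}R_0^2\,|x|^{-2}$ for every $x\in\Omega$; combined with the bound of the previous step this yields
\[
\frac{c(\lambda)}{\|W\|_{L^\infty}R_0^2}\int_\Omega W(x)|u(x)|^2\,dx\leq \int_\Omega\Big(|\nabla u|^2+\frac{\lambda}{|x|^2}|u|^2\Big)dx ,\qquad u\in C_0^\infty(\Omega),
\]
which is precisely the subcriticality of $S_\lambda$. (For $N=2$ one may instead note that $S_\lambda\psi=0$ admits the two linearly independent positive solutions $\psi(x)=|x|^{\pm\sqrt\lambda}$, which rules out criticality.) With this established, Corollary \ref{cor:subcritical--bounded} finishes the proof. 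I do not expect a serious obstacle: the whole argument is a short reduction, and the only care required is the uniform-in-$u$ choice of the constant $\delta$ for each $W$, which is exactly what the property that test functions on $\R^N\setminus\{0\}$ are supported away from the origin supplies.
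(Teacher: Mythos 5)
Your proposal is correct and follows essentially the same route as the paper, which simply invokes Corollary \ref{cor:subcritical--bounded} together with the subcriticality of $S_\lambda$ for $\lambda>\lambda_*$ recorded in Example \ref{exam:hardy-potential}. Your explicit verification of subcriticality (absorbing the negative part of the potential via Hardy's inequality and then dominating an arbitrary nonnegative $W\in C_0^\infty(\R^N\setminus\{0\})$ by a multiple of $|x|^{-2}$ using that $\supp W$ is compactly contained away from the origin) is a correct filling-in of the step the paper leaves implicit.
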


Concerning the threshold case $\lambda=\lambda_*$, 
we recall that $S_{\lambda_*}$ is critical and therefore 
Corollary \ref{cor:subcritical--bounded} is not applicable. 
In this case, we can observe that 
the solution $W_{S_{\lambda_*}}(\cdot)g$ is not bounded in $L^2(\Omega)$. 
The case $N=2$ is already dealt with in \cite{Ikehata2023}. 
The same conclusion also holds for the case $N\geq 3$. 
\begin{proposition}\label{prop:hardy-critical}
Let $N\geq 3$ and let $S_{\lambda_*}$ be the Friedrichs extension of $-\Delta+\frac{\lambda_*}{|x|^2}$ in $L^2(\R^N)$ with $\lambda_*=-C_{\rm H}$. 
If $g\in C_0^\infty(\R^N\setminus\{0\})$ is
radially symmetric 
and satisfies
\begin{equation}\label{intro:prop:hardy-critical:ass}
\int_{\R^N}g(x)|x|^{-\frac{N-2}{2}}\,dx\neq 0,
\end{equation}
then 
there exists a constant $C\geq 1$ such that 
\[
C^{-1} (\log t)^{1/2}
\leq \|W_{S_{\lambda_*}}(t)g\|_{L^2(\R^N)}
\leq C(\log t)^{1/2}, \quad t\geq 2.
\]
\end{proposition}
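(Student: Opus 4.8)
The plan is to exploit the radial symmetry to reduce the problem to a one–dimensional, explicitly diagonalizable model. Since $g$ is radial and $S_{\lambda_*}$ commutes with rotations, $W_{S_{\lambda_*}}(t)g$ stays radial, so I would work in $L^2(\R^N)_{\rm rad}\cong L^2((0,\infty),\omega_{N-1}r^{N-1}\,dr)$. First I would remove the inverse–square singularity by the Liouville‑type unitary map $U\colon L^2((0,\infty),r^{N-1}\,dr)\to L^2((0,\infty),r\,dr)$, $(Uf)(r)=r^{\frac{N-2}{2}}f(r)$: a direct computation of the conjugated Bessel operator shows that $U$ carries the radial part of $-\Delta+\frac{\lambda_*}{|x|^2}$ to the two‑dimensional radial Laplacian $B_2=-\pa_r^2-\frac1r\pa_r$, precisely because the critical value $\lambda_*=-\frac{(N-2)^2}{4}$ cancels the residual term $\frac{(N-2)^2/4}{r^2}$ produced by the conjugation. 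I would then check that $U$ intertwines the \emph{Friedrichs} realizations: the Friedrichs form of $S_{\lambda_*}|_{\rm rad}$ is carried to $\int_0^\infty|f'|^2r\,dr$ with form domain $\overline{C_0^\infty((0,\infty))}$, and since a point has zero $H^1$-capacity in two dimensions this equals the form domain of the free $B_2$; hence $US_{\lambda_*}U^{-1}=B_2$, which is diagonalized by the Hankel transform of order zero, $(\mathcal{H}_0 f)(k)=\int_0^\infty f(r)J_0(kr)r\,dr$ — unitary and self‑reciprocal on $L^2((0,\infty),r\,dr)$, with $\mathcal{H}_0 B_2\mathcal{H}_0^{-1}$ equal to multiplication by $k^2$.

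Next I would set $f=Ug$, i.e.\ $f(r)=r^{\frac{N-2}{2}}g(r)$; since $g\in C_0^\infty(\R^N\setminus\{0\})$ we have $f\in C_0^\infty((0,\infty))$, so $\mathcal{H}_0 f$ is smooth, even in $k$ (as $J_0$ is even) and rapidly decaying, and in particular $|\mathcal{H}_0 f(k)|^2=|\mathcal{H}_0 f(0)|^2+O(k^2)$ near $k=0$, with
\[
\mathcal{H}_0 f(0)=\int_0^\infty f(r)r\,dr=\int_0^\infty g(r)r^{\frac{N}{2}}\,dr=\frac{1}{\omega_{N-1}}\int_{\R^N}g(x)|x|^{-\frac{N-2}{2}}\,dx\neq0
\]
by \eqref{intro:prop:hardy-critical:ass}. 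Using $W_{S_{\lambda_*}}(t)=S_{\lambda_*}^{-1/2}\sin(tS_{\lambda_*}^{1/2})$ and the functional calculus transported through $U$ and $\mathcal{H}_0$, I would obtain
\[
\|W_{S_{\lambda_*}}(t)g\|_{L^2(\R^N)}^2=\omega_{N-1}\int_0^\infty\frac{\sin^2(tk)}{k}\,|\mathcal{H}_0 f(k)|^2\,dk .
\]

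To extract the logarithm I would split the integral at $k=1$. On $(1,\infty)$ one has $\frac{\sin^2(tk)}{k}\le\frac1k\le k$, so that piece is bounded by $\omega_{N-1}\int_1^\infty|\mathcal{H}_0 f(k)|^2k\,dk<\infty$, uniformly in $t$. On $(0,1)$, replacing $|\mathcal{H}_0 f(k)|^2$ by the constant $|\mathcal{H}_0 f(0)|^2$ produces an error $\int_0^1\frac{\sin^2(tk)}{k}O(k^2)\,dk=O(1)$ uniformly in $t$, and the main term is $|\mathcal{H}_0 f(0)|^2\int_0^1\frac{\sin^2(tk)}{k}\,dk=|\mathcal{H}_0 f(0)|^2\int_0^t\frac{\sin^2\nu}{\nu}\,d\nu=\frac{|\mathcal{H}_0 f(0)|^2}{2}\log t+O(1)$ (using $\sin^2\nu=\tfrac12(1-\cos2\nu)$ and convergence of $\int_1^\infty\frac{\cos2\nu}{\nu}\,d\nu$). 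Altogether $\|W_{S_{\lambda_*}}(t)g\|_{L^2(\R^N)}^2=\frac{\omega_{N-1}}{2}|\mathcal{H}_0 f(0)|^2\log t+O(1)$ as $t\to\infty$, with strictly positive leading coefficient, which gives the two‑sided bound for all $t\ge2$ (small $t$ being handled by continuity and strict positivity of $t\mapsto\|W_{S_{\lambda_*}}(t)g\|$ on $[2,\infty)$ together with the boundedness of $(\log t)^{1/2}$ on compacts).

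The main obstacle, I expect, will be the second part of the first step: rigorously verifying that the Liouville transform $U$ intertwines the \emph{Friedrichs} extension $S_{\lambda_*}$ with the free operator $B_2$ — the endpoint $r=0$ is of limit–circle type for the critical operator, so the boundary behaviour has to be matched carefully, and this is exactly where the criticality of $\lambda_*$ and the zero–$H^1$–capacity of a point in $\R^2$ enter — together with keeping track of the normalization constants in the Hankel/Parseval identities so that $\mathcal{H}_0 f(0)$ is exactly proportional to the integral in \eqref{intro:prop:hardy-critical:ass}. Once those points are settled, the remaining steps are elementary and stable.
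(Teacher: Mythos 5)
Your proposal is correct and follows essentially the same route as the paper: the central step in both is the Liouville transform $w_*=|x|^{\frac{N-2}{2}}w$, which converts the critical radial problem into the free two-dimensional wave equation, using that a point has zero capacity in $\R^2$ to identify the Friedrichs extensions. The only difference is that the paper then simply invokes the two-sided $\log t$ estimate for the 2D wave equation from \cite[Theorem 1.2]{Ikehata2023} under the nonvanishing-mean condition, whereas you reprove that estimate directly via the Hankel transform; your computation is sound and merely makes that final step self-contained.
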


The present paper is organized as follows:
In Section \ref{sec:subcrit}, we give our perspective 
by proving 
Theorems \ref{thm:equivalence} and \ref{thm:density}. 
A general structure of the range of fractional operators of nonnegative selfadjoint 
operators is explained in Section \ref{sec:general}. 
An upper bound for solutions to second-order evolution equations (related to wave equations)
is also stated in Section \ref{sec:general}. 
Finally, in Section \ref{sec:app},
we prove the series of assertions 
for wave equations with potentials.

\section{Relation between Subcriticality and the range $R(S^{\alpha})$}
\label{sec:subcrit}

In this section, we prove Theorems 
\ref{thm:equivalence} and \ref{thm:density}.
We first note the important description of subcriticality
(see Murata \cite{Murata1986}).
\begin{lemma}\label{lem:Green}
Let $S$ be the Friedrichs extension of  $S_{\min}$ satisfying \eqref{condition:nonnegative}. 
The following assertions are equivalent:
\begin{itemize}
\item[\bf (i)] $S$ is subcritical. 
\item[\bf (ii)] There exists a positive function 
$\Phi\in L_{\rm loc}^1(\Omega\times \Omega)$ such that 
for every $f\in C_0^\infty(\Omega)$, the function 
\[
u(x)=\int_{\Omega}\Phi(x,y)f(y)\,dy
\]
satisfies $\Delta u=Vu-f$ $(Su=f)$ in the classical sense. 
\end{itemize}
In this case, the function
\begin{equation}\label{eq:kernel-green}
\Phi_S(x,y)=\int_{0}^\infty p_S(x,y,t)\,dt
\end{equation}
is well-defined in $L_{\rm loc}^1(\Omega\times\Omega)$ 
and is the smallest function among $\Phi$ satisfying the property {\bf (ii)}.
\end{lemma}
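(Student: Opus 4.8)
The statement is classical (it is essentially the content of Murata's work), and the plan is to give a self-contained argument organised around the resolvent kernels of the shifted operators $S+\mu$, $\mu>0$. I would write $\mathcal E[u]=\int_\Omega(|\nabla u|^2+V|u|^2)\,dx$ for the quadratic form (and its closure) and recall that for $\mu>0$ the operator $(S+\mu)^{-1}=\int_0^\infty e^{-\mu t}e^{-tS}\,dt$ is bounded, self-adjoint and positivity preserving, with symmetric integral kernel $G_\mu(x,y)=\int_0^\infty e^{-\mu t}p_S(x,y,t)\,dt$. Since $p_S\ge 0$, $G_\mu$ is nondecreasing as $\mu\downarrow0$, so by monotone convergence $G_\mu(x,y)\uparrow\Phi_S(x,y):=\int_0^\infty p_S(x,y,t)\,dt\in[0,+\infty]$, which already gives \eqref{eq:kernel-green}. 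Everything then reduces to showing that subcriticality is exactly the statement that this limit is finite and locally integrable, and conversely.

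For the implication (i)$\Rightarrow$(ii) the crucial observation is a resolvent bound uniform in $\mu$. Fixing a nonnegative $W\in C_0^\infty(\Omega)$, $W\not\equiv0$, with associated constant $\delta>0$ from Definition \ref{def:criticailty-Schroedinger}, I would set $u_\mu=(S+\mu)^{-1}W\ge0$ and combine the identity $\mathcal E[u_\mu]+\mu\|u_\mu\|_{L^2}^2=\int_\Omega Wu_\mu\,dx$ with subcriticality, $\delta\int_\Omega Wu_\mu^2\,dx\le\mathcal E[u_\mu]$ (extended from $C_0^\infty$ to the form domain by density), and Cauchy--Schwarz in the measure $W\,dx$; this forces $\int_\Omega Wu_\mu^2\,dx\le\delta^{-2}\int_\Omega W\,dx$, hence $\langle G_\mu W,W\rangle=\int_\Omega Wu_\mu\,dx\le\delta^{-1}\int_\Omega W\,dx$, uniformly in $\mu>0$. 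Letting $\mu\downarrow0$, using monotone convergence, then Cauchy--Schwarz for the positive semidefinite kernels $G_\mu$ applied to two bumps $W_1,W_2$ localised anywhere in the connected open set $\Omega$, and finally approximating indicators of compact sets from above, I would conclude $\Phi_S\in L^1_{\rm loc}(\Omega\times\Omega)$; interior Harnack estimates for $Su=0$ (available since $V\in C(\Omega)$) then upgrade this to the assertion that $\Phi_S$ is a positive function, locally bounded off the diagonal. To finish (ii), for $f\in C_0^\infty(\Omega)$ with $f\ge0$ I would note that $u:=\int_\Omega\Phi_S(\cdot,y)f(y)\,dy$ is the increasing $L^1_{\rm loc}$-limit of $w_\mu:=(S+\mu)^{-1}f$, which solves $-\Delta w_\mu+Vw_\mu=f-\mu w_\mu$ in $\mathcal D'(\Omega)$; since $0\le\mu w_\mu\le\mu u\to0$ in $L^1_{\rm loc}$, passing to the limit gives $-\Delta u+Vu=f$ in $\mathcal D'(\Omega)$, hence classically by elliptic regularity, with $u>0$ by the strong maximum principle, and $f=f_+-f_-$ handles the general case. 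Thus (ii) holds with $\Phi=\Phi_S$.

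For the converse (ii)$\Rightarrow$(i), given such a $\Phi$ and a bump $W$ as above, I would introduce $\psi:=\int_\Omega\Phi(\cdot,y)W(y)\,dy$, a positive $C^2$ solution of $S\psi=W$, and invoke the ground-state representation: for real $v\in C_0^\infty(\Omega)$, writing $v=\psi\phi$ and integrating by parts gives $\mathcal E[v]=\int_\Omega\psi^2|\nabla(v/\psi)|^2\,dx+\int_\Omega\psi^{-1}v^2(S\psi)\,dx\ge(\max_{\supp W}\psi)^{-1}\int_\Omega Wv^2\,dx$, and for complex $v$ one applies this to $|v|$ using $\mathcal E[|v|]\le\mathcal E[v]$; this is precisely subcriticality. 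For the minimality of $\Phi_S$, I would fix $f\ge0$ in $C_0^\infty(\Omega)$ and $\mu>0$, take an exhaustion $\Omega_n\uparrow\Omega$ by bounded smooth subdomains, and compare on each $\Omega_n$: the Dirichlet realization $S_n$ satisfies $S_n+\mu\ge\mu>0$, so the generalized maximum principle yields $u_\mu^{(n)}\le u_f^\Phi:=\int_\Omega\Phi(\cdot,y)f(y)\,dy$, where $(S_n+\mu)u_\mu^{(n)}=f$ with zero boundary data and $u_f^\Phi$ satisfies $(S+\mu)u_f^\Phi=f+\mu u_f^\Phi\ge f$ in $\Omega_n$ with $u_f^\Phi\ge0$ on $\partial\Omega_n$. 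Letting $n\to\infty$ (so that $u_\mu^{(n)}\uparrow(S+\mu)^{-1}f$) and then $\mu\downarrow0$ (so that $(S+\mu)^{-1}f\uparrow\int_\Omega\Phi_S(\cdot,y)f(y)\,dy$) gives $\int_\Omega\Phi_S(\cdot,y)f(y)\,dy\le u_f^\Phi$ for all such $f$, i.e.\ $\Phi_S\le\Phi$ a.e.

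The main obstacle, I expect, is not a single estimate but the passage from the soft, form-theoretic input (the uniform bilinear bound on $G_\mu$) to hard pointwise information: one must lean on interior Harnack and Schauder-type regularity for $-\Delta+V$ with merely continuous $V$ to know that $\Phi_S$ is a genuine positive function, locally bounded away from the diagonal, and to upgrade $-\Delta u+Vu=f$ from the distributional to the classical sense. Secondary care points are to run the comparison argument on the exhausting domains $\Omega_n$ (where the generalized maximum principle is unambiguous) rather than directly on $\Omega$, to use the standard monotone approximation $u_\mu^{(n)}\uparrow(S+\mu)^{-1}f$ for the Friedrichs extension, and a brief approximation to make the ground-state identity rigorous when $\psi$ is only as regular as the equation $S\psi=W$ provides.
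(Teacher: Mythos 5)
The paper does not prove Lemma~\ref{lem:Green} at all: it is stated as a known result and attributed to Murata \cite{Murata1986}, so there is no internal proof to compare against. Your argument is, as far as I can check, a correct self-contained proof along the classical lines: the uniform bilinear resolvent bound $\lr{G_\mu W,W}\le \delta^{-1}\int_\Omega W\,dx$ obtained from the form identity plus Cauchy--Schwarz in the measure $W\,dx$ is the right way to pass from the variational definition of subcriticality to finiteness of $\Phi_S$, the ground-state (Jacobi/Picone) substitution $v=\psi\phi$ with $S\psi=W$ gives the converse cleanly, and the exhaustion-plus-generalized-maximum-principle comparison is the standard route to minimality of $\Phi_S$. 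Two small points deserve care beyond what you already flag: (a) the subcriticality inequality is stated only for $u\in C_0^\infty(\Omega)$, and you apply it to $u_\mu=(S+\mu)^{-1}W\in D(S)$, so the extension to the form domain (which you mention parenthetically) should be made explicit, using that both sides are continuous for the form norm; (b) in the ground-state step the test function $|v|$ is only Lipschitz and $\psi$ is only as regular as $-\Delta\psi=W-V\psi$ with continuous $V$ allows (say $W^{2,p}_{\rm loc}\cap C^{1,\beta}_{\rm loc}$), so the integration by parts needs the approximation you allude to; with those details filled in, the proof is complete. Note also that strict positivity of $\Phi_S$ is immediate from the strict positivity of $p_S$ (which the paper uses elsewhere), so the appeal to Harnack is only needed for local boundedness off the diagonal, not for positivity.
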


The following lemma plays a central role in the discussion of the present paper. 
We will give a proof later (in Section \ref{sec:general} with a general setting). 
\begin{lemma}\label{lem:characterization-range}
Let $S$ be the Friedrichs extension of  $S_{\min}$ satisfying \eqref{condition:nonnegative}. 
Then for every $\alpha>0$, 
the range $R(S^\alpha)$ 
has the following representation with the $C_0$-semigroup 
$\{e^{-tS}\}_{t\geq0}$ in $L^2(\Omega)$:
\[
R(S^{\alpha})=
\left\{g\in L^2(\Omega)\;;\;\renorm{g}_{R(S^\alpha)}<+\infty\right\}, 
\]
where
\begin{equation}\label{eq:norm-range}
\renorm{g}_{R(S^\alpha)}=
\left(\int_{0}^\infty
\|t^{\alpha} e^{-tS}g\|_{L^2(\Omega)}^2\,\frac{dt}{t}\right)^{1/2}.
\end{equation}
\end{lemma}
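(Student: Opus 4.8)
The plan is to reduce the claim to the Borel functional calculus for the nonnegative self-adjoint operator $S$, combined with the elementary identity $\int_0^\infty t^{2\alpha-1}e^{-2t\lambda}\,dt=\Gamma(2\alpha)(2\lambda)^{-2\alpha}$, which holds for every $\lambda>0$ precisely because $\alpha>0$.

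First I would write the spectral resolution $S=\int_{[0,\infty)}\lambda\,dE_\lambda$, so that $e^{-tS}=\int_{[0,\infty)}e^{-t\lambda}\,dE_\lambda$ is a strongly continuous contraction semigroup and, for each $g\in L^2(\Omega)$, the function $t\mapsto\|e^{-tS}g\|_{L^2}^2=\int_{[0,\infty)}e^{-2t\lambda}\,d\mu_g(\lambda)$ is continuous, where $\mu_g$ denotes the finite spectral measure of $g$ (so that $\mu_g(\{0\})=\|E_{\{0\}}g\|_{L^2}^2$ is the mass carried by $\ker S$). Since the integrand is nonnegative, Tonelli's theorem gives
\[
\renorm{g}_{R(S^\alpha)}^2=\int_0^\infty t^{2\alpha-1}\|e^{-tS}g\|_{L^2}^2\,dt=\int_{[0,\infty)}\Big(\int_0^\infty t^{2\alpha-1}e^{-2t\lambda}\,dt\Big)\,d\mu_g(\lambda),
\]
and the inner integral equals $\Gamma(2\alpha)(2\lambda)^{-2\alpha}$ when $\lambda>0$ and $+\infty$ when $\lambda=0$. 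Hence $\renorm{g}_{R(S^\alpha)}<\infty$ if and only if $\mu_g(\{0\})=0$ and $\int_{(0,\infty)}\lambda^{-2\alpha}\,d\mu_g(\lambda)<\infty$; in that case $\renorm{g}_{R(S^\alpha)}^2=\Gamma(2\alpha)2^{-2\alpha}\int_{(0,\infty)}\lambda^{-2\alpha}\,d\mu_g(\lambda)$.

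Next I would match these two conditions with membership in $R(S^\alpha)$. If $g=S^\alpha h$ with $h\in D(S^\alpha)$, then $E_{\{0\}}g=S^\alpha E_{\{0\}}h=0$ and $\int_{(0,\infty)}\lambda^{-2\alpha}\,d\mu_g(\lambda)=\int_{(0,\infty)}d\mu_h(\lambda)\le\|h\|_{L^2}^2<\infty$. Conversely, if $\mu_g(\{0\})=0$ and $\int_{(0,\infty)}\lambda^{-2\alpha}\,d\mu_g(\lambda)<\infty$, then $h_0:=\int_{(0,\infty)}\lambda^{-\alpha}\,dE_\lambda g$ is a well-defined element of $D(S^\alpha)$ with $S^\alpha h_0=g-E_{\{0\}}g=g$, so $g\in R(S^\alpha)$. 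Together with the previous paragraph this gives the asserted equality of sets, and along the way records the explicit formula for $\renorm{g}_{R(S^\alpha)}$.

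I do not expect a genuine obstacle; the only point that needs care is the spectral mass at $\lambda=0$. If $S$ has a zero-eigenvalue, a nonzero component of $g$ in $\ker S$ makes \emph{both} sides fail simultaneously (the left side because $t^{2\alpha-1}$ is not integrable near $+\infty$ against the constant $\|E_{\{0\}}g\|_{L^2}^2$, the right side because $S^\alpha$ annihilates $\ker S$); if $0$ lies only in the continuous spectrum, the integrability condition $\int_{(0,\infty)}\lambda^{-2\alpha}\,d\mu_g<\infty$ is the effective constraint, and the two descriptions still coincide. The interchange of integrals is immediate from Tonelli, and the hypothesis $\alpha>0$ is exactly what makes $\int_0^1 t^{2\alpha-1}\,dt<\infty$. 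The same argument applies verbatim in the general Hilbert-space framework of Section~\ref{sec:general}, with $L^2(\Omega)$ replaced by the ambient space, which is why the proof is deferred to that section.
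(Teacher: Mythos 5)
Your argument is correct: the Tonelli computation reduces $\renorm{g}_{R(S^\alpha)}^2$ to $\Gamma(2\alpha)2^{-2\alpha}\int_{(0,\infty)}\lambda^{-2\alpha}\,d\mu_g(\lambda)$ (with the mass at $\lambda=0$ forcing divergence), and your spectral-measure characterization of $R(S^\alpha)$ --- namely $\mu_g(\{0\})=0$ together with $\int_{(0,\infty)}\lambda^{-2\alpha}\,d\mu_g<\infty$, with preimage $h_0=\int_{(0,\infty)}\lambda^{-\alpha}\,dE_\lambda g$ --- is exactly right, including the verification that $h_0\in D(S^\alpha)$. The paper proves this as Lemma \ref{lem:range:complete} with the same two basic ingredients (spectral calculus and the identity $\int_0^\infty t^{2\alpha-1}e^{-2t\lambda}\,dt=\Gamma(2\alpha)(2\lambda)^{-2\alpha}$), and its ``easy'' direction coincides with your Tonelli computation; but for the ``hard'' direction it takes a more dynamical route, constructing the preimage as the improper Bochner integral $y_\infty=\frac{1}{\Gamma(\alpha)}\int_0^\infty t^{\alpha-1}e^{-tA}x\,dt$ (a Balakrishnan-type formula for $A^{-\alpha}$), proving convergence by a Cauchy-sequence estimate and only then identifying the limit spectrally. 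That construction buys the explicit quantitative relation $\|y_\infty\|_X=2^\alpha\sqrt{\Gamma(2\alpha)}\,\renorm{x}_{R(A^\alpha)}$ and sets up the completeness argument that follows in the paper; your version is shorter and works entirely at the level of scalar spectral measures. One genuine advantage of your write-up is that it treats the case $0\in\sigma_{\rm p}(S)$ head-on (showing both sides fail simultaneously on $\ker S$), whereas the paper simply excludes this case by the standing assumption $0\in\sigma(A)\setminus\sigma_{\rm p}(A)$ in Section \ref{sec:general}; since the statement of the lemma in Section \ref{sec:subcrit} does not make that assumption explicit, your extra care is welcome rather than redundant.
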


\begin{proof}[Proof of Theorem \ref{thm:equivalence}: the part of {\bf(i)}$\iff${\bf(ii)}]
The assertion {\bf (ii)}$\implies${\bf (i)} is trivial.  
We now prove we treat the opposite direction {\bf (i)}$\implies${\bf (ii)}. 
Fix $g_0\in R(S^{\alpha})\cap [L^2(\Omega)]_{+}$ with $g_0\neq 0$.
In view of Lemma \ref{lem:characterization-range}, we have 
\[
\renorm{g_0}_{R(S^\alpha)}^2=\int_{0}^\infty\|t^{\alpha}e^{-t S}g_0\|_{L^2(\Omega)}^2\,\frac{dt}{t}<+\infty. 
\]
Let $g\in C_0^\infty(\Omega)$ be arbitrary fixed. 
Since the heat kernel $p_S$ is continuous and 
positive everywhere and $g$ is compactly supported, 
there exists positive constants $t_0$ and $C$ such that 
$|g|\leq Ce^{-t_0S}g_0$ on $\Omega$. 
Therefore we have
$|e^{-t S}g|\leq e^{-t S}|g|
\leq C e^{-(t+t_0)S}g_0$ for all $t\geq 0$.
This together with the $L^2$-contraction for $\{e^{-tS}\}_{t\geq0}$ shows that 
\begin{align*}
\int_0^\infty\|t^\alpha e^{-tS}g\|_{L^2(\Omega)}^2\,\frac{dt}{t}
&\leq 
C^2\int_0^\infty\|t^\alpha e^{-(t+t_0) S}g_0\|_{L^2(\Omega)}^2\,\frac{dt}{t}
\\
&\leq 
C^2\int_0^\infty\|t^\alpha e^{-t S}g_0\|_{L^2(\Omega)}^2\,\frac{dt}{t}
\\
&
=
C^2\renorm{g_0}_{R(S^\alpha)}^2.
\end{align*}
Using Lemma \ref{lem:characterization-range} again, we see $g\in R(S^{\alpha})$.
\end{proof}

The following lemma is a direct relationship 
between $\frac{1}{2}\in I_S$ and the existence of 
Green function $\Phi_S$ for $Su=f$.
\begin{lemma}\label{lem:DtoGreen}
Let $S$ be the Friedrichs extension of  $S_{\min}$ satisfying \eqref{condition:nonnegative}. 
Then the following assertions are equivalent:
\begin{itemize}
\item[\bf (i)] $C_0^\infty(\Omega)\subset R(S^{\alpha})$.
\item[\bf (ii)]The function 
\[
\Phi_{S,\alpha}(x,y)=\int_0^\infty t^{2\alpha}p_S(x,y,t)\,\frac{dt}{t}
\]
converges in $L_{\rm loc}^1(\Omega\times\Omega)$, 
where $p_S$ is the associated heat kernel of $\{e^{-tS}\}_{t\geq0}$.
\end{itemize}
\end{lemma}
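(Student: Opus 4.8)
The plan is to turn the semigroup characterization of $R(S^\alpha)$ in Lemma~\ref{lem:characterization-range} into a statement about the time-integrated kernel $\Phi_{S,\alpha}$. The crux is a single identity: for $g\in[L^2(\Omega)]_{+}$, self-adjointness of $e^{-tS}$ and the heat-kernel representation give
$$\|e^{-tS}g\|_{L^2(\Omega)}^2=\langle g,e^{-2tS}g\rangle=\int_\Omega\int_\Omega g(x)\,p_S(x,y,2t)\,g(y)\,dx\,dy\ge 0 ,$$
so the integrand of $\renorm{g}_{R(S^\alpha)}^2=\int_0^\infty t^{2\alpha}\|e^{-tS}g\|_{L^2(\Omega)}^2\,\frac{dt}{t}$ is nonnegative, and Tonelli's theorem together with the substitution $s=2t$ yields
\begin{equation}\tag{$\star$}
\renorm{g}_{R(S^\alpha)}^2=2^{-2\alpha}\int_\Omega\int_\Omega g(x)\,g(y)\,\Phi_{S,\alpha}(x,y)\,dx\,dy ,\qquad g\in[L^2(\Omega)]_{+} ,
\end{equation}
with both sides allowed to be $+\infty$. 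For a general $g\in C_0^\infty(\Omega)$ with $\supp g\subset K$ for some compact $K\subset\Omega$, the same manipulation applied with $|g|$ in place of $g$ (using $p_S>0$, or equivalently the positivity preserving property $|e^{-tS}g|\le e^{-tS}|g|$) and the bound $|g(x)|\,|g(y)|\le\|g\|_{L^\infty(\Omega)}^2$ on $K\times K$ gives the one-sided estimate
$$\renorm{g}_{R(S^\alpha)}^2\le 2^{-2\alpha}\|g\|_{L^\infty(\Omega)}^2\int_K\int_K\Phi_{S,\alpha}(x,y)\,dx\,dy .$$
From this inequality, {\bf (ii)}$\implies${\bf (i)} is immediate: if $\Phi_{S,\alpha}\in L_{\rm loc}^1(\Omega\times\Omega)$, then the right-hand side is finite for every $g\in C_0^\infty(\Omega)$, so $\renorm{g}_{R(S^\alpha)}<\infty$ and Lemma~\ref{lem:characterization-range} gives $g\in R(S^\alpha)$.

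For the converse {\bf (i)}$\implies${\bf (ii)}, I would let $L\subset\Omega\times\Omega$ be an arbitrary compact set, put $K=\pi_1(L)\cup\pi_2(L)$ (a compact subset of $\Omega$, with $\pi_1,\pi_2$ the coordinate projections), and choose $g\in C_0^\infty(\Omega)$ with $g\ge 0$ everywhere and $g\ge 1$ on $K$. By hypothesis $g\in R(S^\alpha)$, hence $\renorm{g}_{R(S^\alpha)}<\infty$; since $g(x)g(y)\ge 1$ on $L\subset K\times K$, identity $(\star)$ gives
$$\int_L\Phi_{S,\alpha}(x,y)\,dx\,dy\le\int_\Omega\int_\Omega g(x)\,g(y)\,\Phi_{S,\alpha}(x,y)\,dx\,dy=2^{2\alpha}\renorm{g}_{R(S^\alpha)}^2<\infty .$$
Since every compact subset of $\Omega\times\Omega$ sits inside some such product $K\times K$, this shows $\Phi_{S,\alpha}\in L_{\rm loc}^1(\Omega\times\Omega)$, and in particular the integral defining $\Phi_{S,\alpha}$ converges for a.e.\ $(x,y)$.

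I do not expect a genuine obstacle here: once $(\star)$ is in place the argument is pure bookkeeping. The two points that merit a line of justification are the Tonelli interchanges — legitimate because $p_S>0$, so the triple integral has a nonnegative integrand and, for each fixed $t$, the $(x,y)$-integral is finite by continuity of $p_S(\cdot,\cdot,2t)$ and compactness of $\supp g$ — and the elementary observation that local integrability of $\Phi_{S,\alpha}$ on $\Omega\times\Omega$ need only be checked on product sets $K\times K$. The real content is the passage, via self-adjointness of $e^{-tS}$ and the semigroup law $e^{-tS}e^{-tS}=e^{-2tS}$, from the norm $\renorm{\cdot}_{R(S^\alpha)}$ of Lemma~\ref{lem:characterization-range} to the quadratic form of the kernel $\Phi_{S,\alpha}$.
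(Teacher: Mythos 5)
Your proof is correct and follows essentially the same route as the paper: both directions reduce, via Lemma~\ref{lem:characterization-range}, to identifying $\renorm{g}_{R(S^\alpha)}^2$ with the quadratic form $4^{-\alpha}\iint g(x)g(y)\Phi_{S,\alpha}(x,y)\,dx\,dy$ through the heat-kernel representation and Tonelli. The only cosmetic difference is that for {\bf (ii)}$\implies${\bf (i)} the paper tests with $\mathbbm{1}_{K_0}$ and then invokes the already-proved equivalence in Theorem~\ref{thm:equivalence}, whereas you bound $\renorm{g}_{R(S^\alpha)}$ directly for each $g\in C_0^\infty(\Omega)$; both are valid.
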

\begin{proof}
We show {\bf (i)}$\implies${\bf (ii)}.
For arbitrary fixed compact set $K\subset \Omega$, 
we take $\psi\in C_0^\infty(\Omega)$ satisfying 
$\psi\geq \mathbbm{1}_K$, 
where $\mathbbm{1}_A$ is the indicator function of $A$. 
Then using the associated heat kernel $p_S$, we have
\begin{align*}
\int_{0}^n \|t^{\alpha}e^{-tS}\psi\|_{L^2(\Omega)}^2\,\frac{dt}{t}
&=
\int_{0}^n t^{2\alpha-1}\lr{e^{-2tS}\psi,\psi}_{L^2(\Omega)}\,dt
\\
&=
\iint_{\supp \psi \times \supp \psi}\left(\int_{0}^n t^{2\alpha-1}p_S(x,y,2t)\,dt\right)\psi(x)\psi(y)\,dx\,dy.
\\
&\geq 
4^{-\alpha}\iint_{K\times K}\left(\int_{0}^{2n} \tau^{2\alpha}p_S(x,y,\tau)\,\frac{d\tau}{\tau}\right)\,dx\,dy.
\end{align*}
The monotone convergence theorem implies that 
$\|\Phi_{S,\alpha}\|_{L^1(K\times K)}\leq 4^{\alpha}\renorm{\psi}_{R(S^{\alpha})}^2$
which is finite by Lemma \ref{lem:characterization-range}.
Since $K$ is chosen arbitrary, we can deduce the same conclusion 
for arbitrary compact set in $\Omega\times \Omega$. 

Next we prove {\bf (ii)}$\implies${\bf (i)}.
Since the assertion of the equivalence in Theorem \ref{thm:equivalence} has been proved, 
it suffices to show that $\mathbbm{1}_{K_0}\in R(S^{\alpha})$
for some compact set $K_0(\neq \emptyset)$ in $\Omega$. 
Fix $n\in\N$ arbitrary. Similarly as above, we have 
\begin{align*}
\int_{0}^n \|t^{\alpha}e^{-tS}\mathbbm{1}_{K_0}\|_{L^2(\Omega)}^2\,\frac{dt}{t}
&=
4^{-\alpha}\iint_{K_0\times K_0}\left(\int_{0}^{2n} t^{2\alpha}p_S(x,y,t)\,\frac{dt}{t}\right)\,dx\,dy
\\
&	\leq 4^{-\alpha}\|\Phi_{S,\alpha}\|_{L^1(K_0\times K_0)}.
\end{align*}
This shows that $\renorm{\mathbbm{1}_{K_0}}_{R(S^{\alpha})}<+\infty$. 
As a consequence, Lemma \ref{lem:characterization-range} yields $\mathbbm{1}_{K_0}\in R(S^\alpha)$.    
\end{proof}

\begin{proof}[Proof of Theorem \ref{thm:equivalence} (the rest)]
By Lemmas \ref{lem:Green} 
and \ref{lem:DtoGreen}, 
we have that 
$S$ is subcritical if and only if 
$C_0^\infty(\Omega)\subset R(S^{1/2})$, but  
it has found out that  such a condition is equivalent to $\frac{1}{2}\in I_S$.  
The proof is complete.
\end{proof}
\begin{proof}[Proof of Theorem \ref{thm:density}]
Let $f\in R(S^\alpha)$ be nonnegative. Then using the monotone convergence theorem, we have 
\begin{align*}
\int_{\Omega\times \Omega}\left(\int_0^n t^{2\alpha-1}p_S(x,y,t)\,dt\right)f(x)f(y)\,dx\,dy
&=
\int_0^n \|t^\alpha e^{-\frac{t}{2}}f\|_{L^2(\Omega)}^2\,\frac{dt}{t}
\\
&
\to 4^{\alpha}
\renorm{f}_{R(S^\alpha)}^2
\end{align*}
as $n\to \infty$. 
The dominated convergence theorem provides that 
the following integral representation is valid:
\[
\renorm{f}_{R(S^\alpha)}^2
=4^{-\alpha}\iint_{\Omega\times \Omega}\Phi_{S,\alpha}(x,y)f(x)f(y)\,dx\,dy.
\]
Thanks to the above representation, we can see 
from the monotone convergence theorem again that the sequence $\{\mathbbm{1}_{\Omega_n}f\}_{n\in\N}$ 
(with $\Omega_n=\{x\in\Omega\;;\;|x|<n, {\rm dist}(x,\pa\Omega^c)>n^{-1}\}$)
converges to $f$ in the sense of the topology 
induced by the norm $\|\cdot\|_{R(S^{\alpha})}
=\|\cdot\|_{L^2(\Omega)}+\renorm{\cdot}_{R(S^\alpha)}$. Combining 
the regularization with the Friedrichs mollifier, 
we can obtain the density of $C_0^\infty(\Omega)$. 
\end{proof}

\section{A general observation for $R(S^{\alpha})$}\label{sec:general}

Let $X$ be a complex Hilbert space 
endowed with inner product 
$\lr{\cdot,\cdot}_X$ and norm $\|\cdot\|_X$. 
We fix a nonnegative selfadjoint operator $A$ in $X$. 
To simplify the situation, 
throughout this section we always assume $0\in \sigma(A)\setminus\sigma_{\rm p}(A)$.
\begin{remark}
In the case $0\notin \sigma(A)$ (or equivalently $0\in \rho(A)$), we immediately have $R(A)=X$.
On the one hand, 
if $0\in \sigma_{\rm p}(A)$, then $R(A^{\alpha})$ is always orthogonal to the null space 
$N(A)$ $(\neq \{0\})$. 
\end{remark}

The following assertion is one of representation for the range 
$R(A^\alpha)$ for $\alpha>0$; 
note that the assertion includes Lemma \ref{lem:characterization-range}.
\begin{lemma}
\label{lem:range:complete}
For every $\alpha>0$, the range $R(A^\alpha)$ 
has the following representation:
\[
R(A^\alpha)=
\left\{x\in X\;;\;\renorm{x}_{R(A^\alpha)}<+\infty\right\}, 
\quad
\renorm{x}_{R(A^\alpha)}=
\left(
\int_{0}^\infty
\|t^{\alpha}e^{-tA}x\|_X^2
\,\frac{dt}{t}\right)^{1/2}.
\]
In particular, the vector space $R(A^{\alpha})$ endowed with the inner product 
\[
\lr{x,y}_{R(A^{\alpha})}
=
\lr{x,y}_{X}
+
\reinner{x,y}_{R(A^{\alpha})}, 
\quad 
\reinner{x,y}_{R(A^{\alpha})}
=\int_{0}^\infty t^{2\alpha}\lr{e^{-tA}x,e^{-tA}y}_X\,\frac{dt}{t}
\]
forms a Hilbert space. 
\end{lemma}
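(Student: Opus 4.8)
The plan is to deduce everything from the spectral theorem applied to the nonnegative selfadjoint operator $A$. Write $A=\int_{[0,\infty)}\lambda\,dE_\lambda$ and, for $x\in X$, let $\mu_x$ be the scalar spectral measure $\mu_x(B)=\|E_Bx\|_X^2$. Since $0\in\sigma(A)\setminus\sigma_{\rm p}(A)$, the spectral projection $E(\{0\})$ vanishes, so $\mu_x(\{0\})=0$ for every $x\in X$; consequently $A^\alpha=\int\lambda^\alpha\,dE_\lambda$ is injective (if $A^\alpha x=0$ then $\mu_x$ is carried by $\{0\}$, hence $x=0$), its inverse $A^{-\alpha}=\int_{(0,\infty)}\lambda^{-\alpha}\,dE_\lambda$ (functional calculus) is selfadjoint, and
\[
R(A^\alpha)=D(A^{-\alpha})=\Big\{x\in X\;;\;\int_{(0,\infty)}\lambda^{-2\alpha}\,d\mu_x(\lambda)<\infty\Big\}.
\]
With this dictionary in hand the whole lemma reduces to a computation with $\mu_x$.

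First I would prove the set identity together with a norm formula. Using $\|e^{-tA}x\|_X^2=\int_{[0,\infty)}e^{-2t\lambda}\,d\mu_x(\lambda)$, Tonelli's theorem (the integrand is nonnegative), and the substitution $s=2t\lambda$, one gets
\[
\renorm{x}_{R(A^\alpha)}^2=\int_0^\infty\!\!\int_{(0,\infty)}t^{2\alpha-1}e^{-2t\lambda}\,d\mu_x(\lambda)\,dt=\frac{\Gamma(2\alpha)}{2^{2\alpha}}\int_{(0,\infty)}\lambda^{-2\alpha}\,d\mu_x(\lambda),
\]
the $\lambda=0$ part of the domain being irrelevant because $\mu_x(\{0\})=0$. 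Hence $\renorm{x}_{R(A^\alpha)}<\infty$ precisely when $x\in R(A^\alpha)$, which is the first assertion, and along the way we obtain $\renorm{x}_{R(A^\alpha)}^2=2^{-2\alpha}\Gamma(2\alpha)\,\|A^{-\alpha}x\|_X^2$.

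Next I would identify the sesquilinear form and establish completeness. For $x,y\in R(A^\alpha)$ the Cauchy--Schwarz inequality in $L^2((0,\infty);\frac{dt}{t})$ gives $\int_0^\infty t^{2\alpha}|\lr{e^{-tA}x,e^{-tA}y}_X|\,\frac{dt}{t}\le\renorm{x}_{R(A^\alpha)}\renorm{y}_{R(A^\alpha)}<\infty$, so the integral defining $\reinner{x,y}_{R(A^\alpha)}$ converges absolutely; polarizing the norm identity just proved yields $\reinner{x,y}_{R(A^\alpha)}=2^{-2\alpha}\Gamma(2\alpha)\lr{A^{-\alpha}x,A^{-\alpha}y}_X$. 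Therefore $\lr{x,y}_{R(A^\alpha)}=\lr{x,y}_X+2^{-2\alpha}\Gamma(2\alpha)\lr{A^{-\alpha}x,A^{-\alpha}y}_X$, i.e.\ the $R(A^\alpha)$-inner product is, up to the fixed factor on the second term, the graph inner product of the closed operator $A^{-\alpha}$. Completeness then follows in the standard way: a $\|\cdot\|_{R(A^\alpha)}$-Cauchy sequence $(x_n)$ is Cauchy in $X$ and $(A^{-\alpha}x_n)$ is Cauchy in $X$, so $x_n\to x$ and $A^{-\alpha}x_n\to z$ in $X$; closedness of $A^{-\alpha}$ forces $x\in D(A^{-\alpha})=R(A^\alpha)$ with $A^{-\alpha}x=z$, whence $\|x_n-x\|_{R(A^\alpha)}^2=\|x_n-x\|_X^2+2^{-2\alpha}\Gamma(2\alpha)\|A^{-\alpha}(x_n-x)\|_X^2\to0$.

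I do not expect a genuine obstacle here; the only points requiring care are verifying $\mu_x(\{0\})=0$ (so the endpoint $\lambda\downarrow 0$ does not spoil the Tonelli computation and so $A^\alpha$ is injective) and the absolute convergence needed to polarize with only $L^2$-type integrability in $t$ — both dealt with above. Finally, Lemma \ref{lem:characterization-range} is the special case $X=L^2(\Omega)$, $A=S$.
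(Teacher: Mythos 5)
Your proof is correct, and it takes a recognizably different (and more streamlined) route than the paper's. Both arguments ultimately rest on the spectral theorem, and the key computation is the same: Tonelli plus the substitution $s=2t\lambda$ gives $\renorm{x}_{R(A^\alpha)}^2=4^{-\alpha}\Gamma(2\alpha)\int_{(0,\infty)}\lambda^{-2\alpha}\,d\mu_x(\lambda)$ (the paper performs essentially this calculation for the inclusion $R(A^\alpha)\subset\{\renorm{x}<\infty\}$). Where you differ is in how the rest is organized. For the reverse inclusion the paper does not invoke the identification $R(A^\alpha)=D(A^{-\alpha})$ up front; instead it constructs the preimage explicitly as the limit of the truncated Balakrishnan integrals $y_n=\Gamma(\alpha)^{-1}\int_0^n t^{\alpha-1}e^{-tA}x\,dt$, shows $\{y_n\}$ is Cauchy using the finiteness of $\renorm{x}_{R(A^\alpha)}$, and then identifies the limit spectrally as $A^{-\alpha}x$. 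For completeness the paper runs a Fatou-type argument (dominated convergence on $[0,T]$ followed by monotone convergence in $T$) on a Cauchy sequence, whereas you reduce the whole inner product, via absolute convergence and polarization, to the graph inner product of the closed operator $A^{-\alpha}$ and quote closedness. Your route buys brevity and makes the exact constant relating $\renorm{\cdot}_{R(A^\alpha)}$ to $\|A^{-\alpha}\cdot\|_X$ transparent (incidentally, your constant is the correct one; the paper's displayed identity $\|y_\infty\|_X=2^\alpha\sqrt{\Gamma(2\alpha)}\,\renorm{x}_{R(A^\alpha)}$ has the factor $\sqrt{\Gamma(2\alpha)}$ on the wrong side); the paper's route buys an explicit integral formula for the preimage $A^{-\alpha}x$, which is in the spirit of the semigroup-only characterization the author emphasizes. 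The only steps you should make sure to justify if writing this up are the standard functional-calculus facts you cite as a ``dictionary,'' namely $E(\{0\})=0$ when $0\notin\sigma_{\rm p}(A)$ and $R(A^\alpha)=D(A^{-\alpha})$ with $A^{-\alpha}=(A^\alpha)^{-1}$ selfadjoint; both are routine and you flag them correctly.
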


\begin{proof}
Assume that $\renorm{x}_{R(A^\alpha)}<+\infty$.
We define the sequence
\[
y_n=
\frac{1}{\Gamma(\alpha)}\int_0^n t^{\alpha}e^{-tA}x\,\frac{dt}{t}, \quad n\in\N.
\]
Then for every $n,m\in \N$ satisfying $m\geq n$, one can compute 
\begin{align*}
\Gamma(\alpha)^2\|y_n-y_m\|_{X}^2
&=
\int_n^m\int_n^m t^{\alpha-1}s^{\alpha-1}\|e^{-\frac{t+s}{2}A}x\|_X^2\,dt\,ds 
\\
&\leq 
4^{\alpha}B(\alpha,\alpha)\int_n^m t^{2\alpha-1}\|e^{-\tau A}x\|_X^2\,d\tau,
\end{align*}
where we have used the change of variables $(t,s)=(2\tau(1-\sigma),2\tau\sigma)$. 
This shows that $\{u_n\}_{n=1}^\infty$ is the Cauchy sequence in $X$ 
and then it converges to an element $y_\infty$ satisfying
$\|y_\infty\|_{X}=2^\alpha \sqrt{\Gamma(2\alpha)}\renorm{x}_{R(A^{\alpha})}$. 
Since $y_\infty$ has the spectral representation 
\begin{align*}
\lr{y_\infty,z}_X
&=
\frac{1}{\Gamma(\alpha)}
\int_0^\infty t^{\alpha}\left(\int_{\R}e^{-t\lambda}d\lr{E(\lambda)x,z}_X\right)\,\frac{dt}{t}
\\
&=
\int_{\R}\lambda^{-\alpha}d\lr{E(\lambda)x,z}_X
\\
&=\lr{A^{-\alpha}x,z}_X
\end{align*}
for all $z\in X$, where 
$E(\cdot)$ is 
the associated spectral measure of the selfadjoint operator $A$.
Therefore we conclude that 
$y_\infty\in D(A^\alpha)$ and $A^{\alpha}y_\infty=x\in R(A^\alpha)$.  
Conversely, let $x=A^\alpha y\in R(A^{\alpha})$. Then 
similarly we have 
\begin{align*}
\int_{0}^n
\|t^{\alpha}e^{-tA}x\|_X^2
\,\frac{dt}{t}
&=
\int_0^n t^{2\alpha}\left(\int_{\R}\lambda^{2\alpha}e^{-2t\lambda}d\lr{E(\lambda)y,y}_X\right)\,\frac{dt}{t}
\\
&=
\int_{\R}
\left(\int_0^n\lambda^{2\alpha}e^{-2t\lambda}\,\frac{dt}{t}
\right)d\lr{E(\lambda)y,y}_X
\\
&\leq 
4^{-\alpha}\Gamma(2\alpha)\|y\|_{X}^2.
\end{align*}
This means $\renorm{x}_{R(A^\alpha)}<+\infty$. 

A direct proof of the completeness is the following. 
Let $\{x_n\}_{n=1}^\infty$ be a Cauchy sequence in $R(A^{\alpha})$. 
By the completeness of $X$, 
there exists an element $x_{\infty}\in X$ 
such that
$x_n\to x_{\infty}$ in $X$ as $n\to \infty$. 
We prove $x_\infty\in R(A^{\alpha})$.
For arbitrary fixed $T>0$, we can see from 
the dominated convergence theorem that 
for $n\in\N$,
\begin{align*}
\int_{0}^T \|t^{\alpha}e^{-tA}(x_n-x)\|_X^2\,
\frac{dt}{t}
&=
\lim_{m\to \infty}\int_{0}^T \|t^{\alpha}e^{-tA}(x_n-x_m)\|_X^2\,\frac{dt}{t}
\\
&\leq 
\limsup_{m\to \infty}\renorm{x_n-x_m}_{R(A^{\alpha})}^2.
\end{align*}
Since the right-hand side of the above inequality is independent of $T$, 
the monotone convergence theorem 
provides that $x_n-x_\infty$ 
belongs to $R(A^{\alpha})$ (for all $n\in\N$), 
and so is $x_\infty$.
Consequently, we have $x_n\to x_\infty$ in $R(A^{\alpha})$ 
as $n\to \infty$. 
\end{proof}

We recall some of standard consequence of nonnegative selfadjoint operators. 
The following lemma explains 
that the condition $0\in \sigma(A)\setminus\sigma_{\rm p}(A)$ makes the situation simpler. 
\begin{lemma}\label{lem:fundamental}
The following assertions hold:
\begin{itemize}
\item[\bf (i)] 
For every $\alpha>0$, $R(A^{\alpha})$ is dense in $X$.
\item[\bf (ii)] 
For every $x\in X$, one has $\lim\limits_{t\to\infty}\|e^{-tS}x\|_X=0$.
\end{itemize}
\end{lemma}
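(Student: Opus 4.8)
The plan is to read off both assertions from the spectral resolution $A=\int_{[0,\infty)}\lambda\,dE(\lambda)$, the one point that does the work being that the standing hypothesis $0\in\sigma(A)\setminus\sigma_{\rm p}(A)$ is equivalent to $E(\{0\})=0$, since the range of the spectral projection $E(\{0\})$ is precisely the eigenspace $N(A)$ for the eigenvalue $0$.

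For {\bf (i)}, I would first note that $A^{\alpha}=\int_{[0,\infty)}\lambda^{\alpha}\,dE(\lambda)$ is again a (nonnegative) selfadjoint operator, so that $\overline{R(A^{\alpha})}=N((A^{\alpha})^{*})^{\perp}=N(A^{\alpha})^{\perp}$; it therefore suffices to check $N(A^{\alpha})=\{0\}$. But $\|A^{\alpha}x\|_X^2=\int_{[0,\infty)}\lambda^{2\alpha}\,d\|E(\lambda)x\|_X^2$, so $A^{\alpha}x=0$ forces the finite measure $\|E(\cdot)x\|_X^2$ to be concentrated at $\lambda=0$, i.e. $x=E(\{0\})x=0$. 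Hence $R(A^{\alpha})$ is dense in $X$.

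For {\bf (ii)}, fixing $x\in X$ and writing $\mu_x(\cdot)=\|E(\cdot)x\|_X^2$ (a finite Borel measure on $[0,\infty)$ of total mass $\|x\|_X^2$), the functional calculus gives $\|e^{-tA}x\|_X^2=\int_{[0,\infty)}e^{-2t\lambda}\,d\mu_x(\lambda)$. As $t\to\infty$ the integrand decreases pointwise to $\mathbbm{1}_{\{0\}}$ and is dominated by $1\in L^1(\mu_x)$, so dominated (equivalently, monotone) convergence yields $\lim_{t\to\infty}\|e^{-tA}x\|_X^2=\mu_x(\{0\})=\|E(\{0\})x\|_X^2=0$. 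As an alternative to this spectral computation one can use instead that $\{e^{-tA}\}_{t\ge0}$ is a contraction semigroup, that $\|e^{-tA}Ay\|_X\le(et)^{-1}\|y\|_X\to0$ for $y\in D(A)$ because $\sup_{\lambda\ge0}\lambda e^{-t\lambda}=(et)^{-1}$, and then invoke part {\bf (i)} (with $\alpha=1$) together with an $\varepsilon/3$-argument to pass to a general $x\in X$.

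The argument is essentially routine spectral theory, so I do not anticipate a genuine obstacle; the only step worth isolating is the translation of ``$0$ lies in the spectrum but is not an eigenvalue'' into the quantitative statement $E(\{0\})=0$ (i.e. $R(E(\{0\}))=N(A)=\{0\}$), on which both parts hinge.
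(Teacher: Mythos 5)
Your proof is correct, and it is worth noting that it follows a genuinely different route from the paper's. For \textbf{(i)} the paper does not invoke the functional calculus at all: it reduces the claim to the density of $R(A^n)$ for integer $n$ (which suffices since $R(A^n)\subset R(A^\alpha)$ for $n\geq\alpha$) and then argues by induction, using only that a vector orthogonal to $R(A)$ lies in $D(A^*)=D(A)$ with $Ax=0$, hence vanishes because $0\notin\sigma_{\rm p}(A)$; the inductive step requires an approximation argument via the resolvent to handle the functional $z\mapsto\lr{x,Az}_X$ on $D(A)\cap R(A^k)$. Your spectral-theoretic version --- identifying the hypothesis $0\in\sigma(A)\setminus\sigma_{\rm p}(A)$ with $E(\{0\})=0$ and then reading off $N(A^\alpha)=\{0\}$, hence $\overline{R(A^\alpha)}=N(A^\alpha)^\perp=X$ --- is shorter, treats all $\alpha>0$ uniformly without the detour through integer powers, and makes transparent exactly where the standing hypothesis enters; the paper's version buys a more elementary operator-theoretic argument that does not presuppose the spectral theorem for part (i) (though the paper freely uses the spectral measure elsewhere, e.g.\ in the proof of Lemma \ref{lem:range:complete}, so nothing is really saved). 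For \textbf{(ii)} your dominated-convergence computation with $\mu_x(\{0\})=\|E(\{0\})x\|_X^2=0$ is again correct and more direct; your sketched alternative (contractivity, the analyticity bound $\|e^{-tA}Ay\|_X\lesssim t^{-1}\|y\|_X$, density of $R(A)$ from part (i), and an $\varepsilon$-argument) is precisely the paper's proof, so you have in effect recovered it as a special case. No gaps.
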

\begin{proof}
{\bf (i)} It suffices to show that $R(A^n)$ is dense in $X$ for all $n\in\N$. 
We argue by induction. 
First we prove the case $n=1$. 
If $x\in X$ satisfies 
$\lr{x,Ay}_X=0$ for all $y\in D(A)$. 
Then by definition we have $x\in D(A^*)=D(A)$
with $Ax=A^*x=0$ and therefore $x=0$. This means that $R(A)$ is dense in $X$.
Next we assume the density of $R(A^k)$ in $X$ for some $k\in\N$. 
Then by the approximation argument via the resolvent, we see that 
$D(A)\cap R(A^k)$ is also dense in $D(A)$ in the sense of the graph norm.
If $x\in X$ satisfy $\lr{x,A^{k+1}y}_X=0$ for $y\in D(A^{k+1})$, then 
the functional $x_*:z\in D(A)\cap R(A^k)\mapsto \lr{x,Az}_X$ is densely defined in $D(A)$ 
which is identically zero. By density, we can uniquely extend the functional $x_*$ 
to 
the bounded functional $x_{**}:z\in D(A)\mapsto \lr{x,Az}_X$ 
which is also identically zero.  
The rest is the same as the case $n=1$. Consequently, we have 
$x=0$.

{\bf (ii)}
Let $x\in X$ be arbitrary fixed. 
Then for every $\ep>0$, 
by {\bf (ii)} with $\alpha=1$
there exists an element $y\in D(A)$ such that 
$\|x-Ay\|_H<\frac{\ep}{2}$.
The analyticity $\|e^{-tA}Ay\|\leq t^{-1}\|y\|_X$ 
and the contractivity 
give that for $t\geq 4\ep^{-1}\|y\|_{X}$, 
\begin{align*}
\|e^{-tA}x-x\|_X
&\leq \|e^{-tA}(x-Ay)\|_X+\|e^{-tA}Ay\|_X
\\
&\leq \|x-Ay\|_X+t^{-1}\|y\|_{X}
\\
&<\ep.
\end{align*}
The proof is complete.
\end{proof}

Thanks to Lemma \ref{lem:range:complete}, 
the operator $A$ can be successfully 
realized in the Hilbert space $R(A^\alpha)$. 
The definition is the following. 
\[
\begin{cases}
D(A_\alpha):=D(A)\cap R(A^{\alpha}),
\\
A_\alpha u=Au.
\end{cases}
\]
\begin{lemma}
For every $\alpha>0$, 
the operator $A_\alpha$ is 
densely defined, nonnegative and selfadjoint in $R(A^\alpha)$.
\end{lemma}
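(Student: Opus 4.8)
The plan is to realize $A_\alpha$ as the inverse of an explicit bounded self-adjoint operator on the Hilbert space $R(A^\alpha)$ from Lemma~\ref{lem:range:complete}, which delivers self-adjointness, nonnegativity and dense definedness all at once. First I would record that $A_\alpha$ really sends $D(A_\alpha)$ into $R(A^\alpha)$: if $u\in D(A)\cap R(A^\alpha)$, write $u=A^\alpha w$ with $w\in D(A^\alpha)$; since $A^\alpha w\in D(A)$, the spectral theorem forces $w\in D(A^{1+\alpha})$, hence $Aw\in D(A^\alpha)$ and $A_\alpha u=Au=A^\alpha(Aw)\in R(A^\alpha)$. Then I would introduce the restriction $B_\alpha$ of the resolvent $(A+I)^{-1}$ to $R(A^\alpha)$. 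Because $(A+I)^{-1}$ is an $X$-contraction commuting with every $e^{-tA}$, the integral formula \eqref{eq:norm-range} gives $\renorm{(A+I)^{-1}f}_{R(A^\alpha)}\le\renorm{f}_{R(A^\alpha)}$ as well, so by Lemma~\ref{lem:range:complete} the operator $B_\alpha$ maps $R(A^\alpha)$ into itself and is a contraction for its Hilbert norm; the same commutation together with self-adjointness and nonnegativity of $(A+I)^{-1}$ in $X$ yields $\lr{B_\alpha f,g}_{R(A^\alpha)}=\lr{f,B_\alpha g}_{R(A^\alpha)}$ and $\lr{B_\alpha f,f}_{R(A^\alpha)}\ge0$, and $B_\alpha$ is visibly injective. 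Thus $B_\alpha$ is a bounded, injective, nonnegative self-adjoint operator on $R(A^\alpha)$, so $B_\alpha^{-1}$ is a nonnegative self-adjoint operator with domain $R(B_\alpha)$. Using the preliminary step one checks $R(B_\alpha)=D(A_\alpha)$ (for $u\in D(A_\alpha)$ one has $(A+I)u\in R(A^\alpha)$ and $u=B_\alpha(A+I)u$) and $B_\alpha^{-1}=A_\alpha+I$ on it; therefore $A_\alpha=B_\alpha^{-1}-I$ is densely defined, self-adjoint and nonnegative in $R(A^\alpha)$.

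A more hands-on route would establish symmetry and nonnegativity of $A_\alpha$ directly — the former from self-adjointness of $A$ in $X$ and the commutation $Ae^{-tA}=e^{-tA}A$, the latter from $\lr{A_\alpha u,u}_{R(A^\alpha)}=\lr{Au,u}_X+\int_0^\infty t^{2\alpha}\lr{Ae^{-tA}u,e^{-tA}u}_X\,\frac{dt}{t}\ge0$ — prove density of $D(A_\alpha)$ by approximating $x\in R(A^\alpha)$ with $e^{-tA}x\in\big(\bigcap_{n\in\N}D(A^n)\big)\cap R(A^\alpha)$ (the convergence in $R(A^\alpha)$ following from dominated convergence in \eqref{eq:norm-range}, since $\|e^{-tA}\|_{X\to X}\le1$), and then invoke the classical criterion that a densely defined nonnegative symmetric operator is self-adjoint as soon as $R(A_\alpha+I)$ is all of $R(A^\alpha)$; the latter holds because $u=(A+I)^{-1}f$ lies in $D(A)\cap R(A^\alpha)$ and solves $(A_\alpha+I)u=f$ for every $f\in R(A^\alpha)$.

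I expect the only genuinely delicate point in either version to be the claim that the resolvent $(A+I)^{-1}$ preserves the smaller space $R(A^\alpha)$ and acts symmetrically on it; this is precisely where Lemma~\ref{lem:range:complete} — the identification of $R(A^\alpha)$ with $\{x\in X:\renorm{x}_{R(A^\alpha)}<\infty\}$, together with the fact that $e^{-tA}$ is a contraction commuting with the resolvent — carries the argument. Everything else reduces to standard spectral theory for nonnegative self-adjoint operators, the well-definedness of $A_\alpha$ being the only extra bookkeeping needed.
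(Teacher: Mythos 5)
Your proposal is correct, and your second (``hands-on'') route is essentially the paper's own proof: the paper checks symmetry and nonnegativity and then verifies the range condition $R(I+A_\alpha)=R(A^\alpha)$ by observing that $y=(I+A)^{-1}x$ lies in $D(A)\cap R(A^\alpha)$, invoking the standard maximal-monotonicity criterion (Brezis) for selfadjointness and dense definedness. Your first route via the restricted resolvent $B_\alpha$ is just a repackaging of the same idea, though you usefully make explicit two points the paper leaves tacit, namely that $A_\alpha$ really maps $D(A_\alpha)$ into $R(A^\alpha)$ and that $(I+A)^{-1}$ preserves $R(A^\alpha)$.
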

\begin{proof}
Since $A_\alpha$ is clearly symmetric and nonnegative in $R(A^\alpha)$, 
it suffices to prove only the range condition
$R(I+A_\alpha)=R(A^\alpha)$ (see Brezis \cite[Proposition 7.1]{Book_Brezis}). 
For $x\in R(A^\alpha)\subset X$, we put $y=(I+A)^{-1}x\in D(A)$. 
Then clearly we see that $y\in D(A)\cap R(A^\alpha)=D(A_\alpha)$
and $(1+A_\alpha)y=(1+A)y=x$. This provides $R(I+A_\alpha)=R(A^\alpha)$. 
\end{proof}

The following inequality is a kind of interpolation inequality 
which is essential to analyse wave equations with potentials in this paper.
\begin{lemma}\label{lem:embedding1}
If $\alpha\in (0,\frac{1}{2})$, then every $x\in D(A_{\alpha}^{1/2})=D(A^{1/2})\cap R(A^{\alpha})$, 
\[
\|x\|_X\leq 
2^{\frac{1}{2}+\alpha(1-2\alpha)}
\renorm{A^{1/2}x}_{R(A^{\alpha})}^{2\alpha}
\renorm{x}_{R(A^{\alpha})}^{1-2\alpha}.
\]
If $\alpha=\frac{1}{2}$, then 
the identity $\|x\|_X=2^{1/2}\renorm{A^{1/2}x}_{R(A^{1/2})}$ 
holds for all $x\in D(A^{1/2})$.
\end{lemma}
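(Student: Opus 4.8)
The plan is to reduce the whole statement to one application of H\"older's inequality in a single variable, by passing to the spectral representation of $A$ (exactly as was done in the proof of Lemma~\ref{lem:range:complete}). Let $E(\cdot)$ be the spectral measure of $A$ and, for a fixed $x\in X$, set $\mu_x(\cdot)=\|E(\cdot)x\|_X^2$; since $0\in \sigma(A)\setminus\sigma_{\rm p}(A)$, this is a finite Borel measure on $(0,\infty)$ of total mass $\|x\|_X^2$ with no atom at the origin. Starting from $\|e^{-tA}x\|_X^2=\int_{(0,\infty)}e^{-2t\lambda}\,d\mu_x(\lambda)$, Tonelli's theorem together with the elementary identity $\int_0^\infty t^{2\alpha-1}e^{-2t\lambda}\,dt=\Gamma(2\alpha)(2\lambda)^{-2\alpha}$ would first give the two representations
\[
\renorm{x}_{R(A^\alpha)}^2=\frac{\Gamma(2\alpha)}{4^{\alpha}}\int_{(0,\infty)}\lambda^{-2\alpha}\,d\mu_x(\lambda),
\qquad
\renorm{A^{1/2}x}_{R(A^\alpha)}^2=\frac{\Gamma(2\alpha)}{4^{\alpha}}\int_{(0,\infty)}\lambda^{1-2\alpha}\,d\mu_x(\lambda),
\]
the second one because $\|e^{-tA}(A^{1/2}x)\|_X=\|A^{1/2}e^{-tA}x\|_X$ and the spectral measure associated to $A^{1/2}x$ is $\lambda\,d\mu_x(\lambda)$. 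For $x\in D(A^{1/2})\cap R(A^\alpha)$ both integrals are finite (split at $\lambda=1$ and use $\lambda^{1-2\alpha}\le\lambda^{-2\alpha}$ on $(0,1]$ and $\lambda^{1-2\alpha}\le\lambda$ on $(1,\infty)$), and if $\renorm{A^{1/2}x}_{R(A^\alpha)}=+\infty$ the asserted inequality is trivially true.

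With these identities in hand the endpoint $\alpha=\tfrac12$ is immediate: then $\Gamma(2\alpha)=\Gamma(1)=1$ and $\lambda^{1-2\alpha}=1$, so $\renorm{A^{1/2}x}_{R(A^{1/2})}^2=\tfrac12\int_{(0,\infty)}d\mu_x(\lambda)=\tfrac12\|x\|_X^2$, which is precisely the stated identity $\|x\|_X=2^{1/2}\renorm{A^{1/2}x}_{R(A^{1/2})}$ (note that only $x\in D(A^{1/2})$ is needed for this, since the finiteness came for free). For $\alpha\in(0,\tfrac12)$ I would interpolate the two representations: from the pointwise identity $1=(\lambda^{-2\alpha})^{1-2\alpha}(\lambda^{1-2\alpha})^{2\alpha}$ and H\"older's inequality with the conjugate exponents $\frac{1}{1-2\alpha}$ and $\frac{1}{2\alpha}$ (both genuinely in $(1,\infty)$ exactly because $\alpha<\tfrac12$),
\[
\|x\|_X^2=\int_{(0,\infty)}d\mu_x
\le\Big(\int_{(0,\infty)}\lambda^{-2\alpha}\,d\mu_x\Big)^{1-2\alpha}\Big(\int_{(0,\infty)}\lambda^{1-2\alpha}\,d\mu_x\Big)^{2\alpha}.
\]
Substituting the two representations and using $\big(\tfrac{4^\alpha}{\Gamma(2\alpha)}\big)^{(1-2\alpha)+2\alpha}=\tfrac{4^\alpha}{\Gamma(2\alpha)}$ leaves $\|x\|_X\le\frac{2^{\alpha}}{\sqrt{\Gamma(2\alpha)}}\,\renorm{A^{1/2}x}_{R(A^\alpha)}^{2\alpha}\,\renorm{x}_{R(A^\alpha)}^{1-2\alpha}$.

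It remains only to compare constants. Since $\Gamma$ is decreasing on $(0,1]$ with $\Gamma(1)=1$, one has $\Gamma(2\alpha)\ge1$, hence $\frac{2^\alpha}{\sqrt{\Gamma(2\alpha)}}\le2^\alpha$; and $\tfrac12+\alpha(1-2\alpha)-\alpha=\tfrac12-2\alpha^2\ge0$ for $\alpha\le\tfrac12$, so $2^\alpha\le2^{1/2+\alpha(1-2\alpha)}$, which yields the constant in the statement. I do not expect a genuine obstacle here: the argument is essentially a weighted interpolation in the spectral variable, and the only points that need a little care are the Tonelli interchanges (harmless, all integrands being nonnegative), the admissibility of the H\"older exponents (this is precisely where $\alpha<\tfrac12$ enters), and the cheap gamma-function estimate used to match the paper's stated constant.
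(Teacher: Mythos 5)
Your proof is correct, but it takes a genuinely different route from the paper's. The paper argues entirely on the semigroup side: it starts from the energy identity $\|x\|_X^2-\|e^{-tA}x\|_X^2=2\int_0^t\|A^{1/2}e^{-\tau A}x\|_X^2\,d\tau$, invokes the decay $\|e^{-tA}x\|_X\to0$ (Lemma~\ref{lem:fundamental}, which is where the hypothesis $0\notin\sigma_{\rm p}(A)$ enters for the author), and then applies H\"older in the \emph{time} variable to the factorization $f=(f\tau^{2\alpha-1})^{2\alpha}(f\tau^{2\alpha})^{1-2\alpha}$ with $f(\tau)=\|A^{1/2}e^{-\tau A}x\|_X^2$, controlling the second factor by the analyticity bound $\|A^{1/2}e^{-\frac{\tau}{2}A}x\|_X^2\leq\frac{2}{\tau}\|x\|_X^2$. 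You instead diagonalize via the spectral measure, turn both $\renorm{\cdot}_{R(A^\alpha)}$ quantities into explicit moments $\frac{\Gamma(2\alpha)}{4^\alpha}\int\lambda^{-2\alpha}\,d\mu_x$ and $\frac{\Gamma(2\alpha)}{4^\alpha}\int\lambda^{1-2\alpha}\,d\mu_x$, and apply H\"older in the \emph{spectral} variable; the hypothesis $0\notin\sigma_{\rm p}(A)$ enters for you as the absence of an atom at $\lambda=0$. Your computations (the Tonelli interchange, the identity $\int_0^\infty t^{2\alpha-1}e^{-2t\lambda}\,dt=\Gamma(2\alpha)(2\lambda)^{-2\alpha}$, the exponent bookkeeping $1=(\lambda^{-2\alpha})^{1-2\alpha}(\lambda^{1-2\alpha})^{2\alpha}$, the finiteness argument by splitting at $\lambda=1$, and the gamma-function estimate $\Gamma(2\alpha)\geq1$ on $(0,1)$) all check out, and the endpoint $\alpha=\tfrac12$ falls out as the degenerate case of the same identities. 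What each approach buys: yours yields the sharper constant $2^{\alpha}/\sqrt{\Gamma(2\alpha)}$ and dispenses with both the decay lemma and the analyticity estimate, making the interpolation structure completely transparent; the paper's argument, by contrast, never opens up the spectral resolution and so would survive in settings where only an analytic contraction semigroup (not a selfadjoint operator) is available. Within the selfadjoint framework of this paper both are equally legitimate, and your version proves a slightly stronger inequality than the one stated.
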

\begin{proof}
We start with the following identity:
\begin{align*}
\|x\|_X^2-\|e^{-tA}x\|_X^2
=2\int_0^t\|A^{1/2}e^{-\tau A}x\|_X^2\,d\tau.
\end{align*}
Recalling Lemma \ref{lem:fundamental}, we have the desired assertion for $\alpha=\frac{1}{2}$.
If $\alpha\in (0,\frac{1}{2})$, then 
we see from the H\"older inequality and 
the analyticity 
$\|A^{1/2}e^{-\frac{t}{2}A}x\|_{X}^2\leq \frac{2}{t}\|x\|_X^2$
that
\begin{align*}
\int_0^t\|A^{1/2}e^{-\tau A}x\|_X^2\,d\tau
&\leq 
\left(
\int_0^t \|\tau^{\alpha}e^{-\tau A}A^{1/2}x\|_X^2\,\frac{d\tau}{\tau}
\right)^{2\alpha}
\left(
\int_0^t\|\tau^{\alpha}A^{1/2}e^{-\tau A}x\|_X^2\,d\tau
\right)^{1-2\alpha}
\\
&\leq 
\renorm{A^{1/2}x}_{R(A^{\alpha})}^{4\alpha}
\left(
2\int_0^t \|\tau^{\alpha}e^{-\frac{\tau}{2}A}x\|_X^2\,\frac{d\tau}{\tau}
\right)^{1-2\alpha}.
\end{align*}
Letting $t\to\infty$, we obtain 
the desired inequality.
\end{proof}

Here we consider the Cauchy problem of the second-order evolution equations:
\begin{equation}\label{eq:abstract-wave}
\begin{cases}
w''(t)+Aw(t)=0, &\ t>0, 
\\
(w,w')(0)=(0,g),
\end{cases}
\end{equation}
where $g\in R(A^{\alpha})$ for some $\alpha>0$. 
Since the realization $A_\alpha$ is nonnegative and selfadjoint in $R(A^\alpha)$,
the problem \eqref{eq:abstract-wave} can be uniquely solved in the Hilbert space $R(A^\alpha)$. 
We denote $W_{A}(\cdot)g$ 
as the unique solution $w(\cdot)$ of \eqref{eq:abstract-wave}.
Then we further find that 
\begin{proposition}
\label{prop:abstractwave}
Let $\alpha\in (0,\frac{1}{2}]$ and $g\in R(A^{\alpha})$. Then 
$W_A(\cdot)g$ satisfies
\[
t^{2\alpha-1}\|W_A(\cdot)g\|_X\leq
2^{\frac{1}{2}+\alpha(1-2\alpha)}
\renorm{g}_{R(A^\alpha)}, \quad t>0. 
\]
In particular, if $\alpha=\frac{1}{2}$, 
then $W_A(\cdot)g$ is uniformly bounded in $X$. 
\end{proposition}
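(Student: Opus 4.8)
The plan is to express the solution $W_A(\cdot)g$ through the spectral calculus of $A_\alpha$ in the Hilbert space $R(A^\alpha)$ and then apply the interpolation inequality of Lemma \ref{lem:embedding1} to the realization $A_\alpha$ rather than to $A$ itself. Concretely, since $A_\alpha$ is nonnegative selfadjoint in $R(A^\alpha)$, the unique solution of \eqref{eq:abstract-wave} is $w(t) = A_\alpha^{-1/2}\sin(t A_\alpha^{1/2}) g$, understood via the spectral theorem; in particular $w(t) \in D(A_\alpha^{1/2}) = D(A^{1/2}) \cap R(A^\alpha)$ for every $t$, so Lemma \ref{lem:embedding1} is applicable to $x = w(t)$. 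The point is that the two quantities entering the right-hand side of that lemma, namely $\renorm{A^{1/2} w(t)}_{R(A^\alpha)}$ and $\renorm{w(t)}_{R(A^\alpha)}$, can each be bounded by $\renorm{g}_{R(A^\alpha)}$ times an explicit power of $t$, because in the spectral variable $\lambda \ge 0$ of $A$ one has $|\sin(t\sqrt\lambda)/\sqrt\lambda| \le t$ and $|\sqrt\lambda \cdot \sin(t\sqrt\lambda)/\sqrt\lambda| = |\sin(t\sqrt\lambda)| \le 1$.

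The key steps, in order, would be: \textbf{(1)} Write $\reinner{x,y}_{R(A^\alpha)} = \int_0^\infty t^{2\alpha}\lr{e^{-tA}x, e^{-tA}y}_X \, \tfrac{dt}{t}$ and observe that for any bounded Borel function $m$ of $A$ commuting with the semigroup, $\renorm{m(A)x}_{R(A^\alpha)} \le \|m\|_\infty \renorm{x}_{R(A^\alpha)}$ — this is immediate from $\|e^{-tA}m(A)x\|_X \le \|m\|_\infty \|e^{-tA}x\|_X$ by the spectral theorem. \textbf{(2)} Apply this with $m(\lambda) = \sin(t\sqrt\lambda)$ to get $\renorm{A^{1/2}w(t)}_{R(A^\alpha)} = \renorm{\sin(tA^{1/2})g}_{R(A^\alpha)} \le \renorm{g}_{R(A^\alpha)}$, and with $m(\lambda) = \sin(t\sqrt\lambda)/(t\sqrt\lambda)$ (which has sup-norm $\le 1$) to get $\renorm{w(t)}_{R(A^\alpha)} = t\,\renorm{m(A)g}_{R(A^\alpha)} \le t\,\renorm{g}_{R(A^\alpha)}$. \textbf{(3)} Feed these into Lemma \ref{lem:embedding1}: for $\alpha \in (0,\tfrac12)$,
\[
\|W_A(t)g\|_X \le 2^{\frac12 + \alpha(1-2\alpha)} \renorm{A^{1/2}w(t)}_{R(A^\alpha)}^{2\alpha}\renorm{w(t)}_{R(A^\alpha)}^{1-2\alpha} \le 2^{\frac12+\alpha(1-2\alpha)}\, t^{1-2\alpha}\,\renorm{g}_{R(A^\alpha)},
\]
which is the claimed bound after multiplying by $t^{2\alpha-1}$. \textbf{(4)} For $\alpha = \tfrac12$, use instead the identity $\|x\|_X = 2^{1/2}\renorm{A^{1/2}x}_{R(A^{1/2})}$ from Lemma \ref{lem:embedding1} with $x = w(t)$, giving $\|W_A(t)g\|_X = 2^{1/2}\renorm{\sin(tA^{1/2})g}_{R(A^{1/2})} \le 2^{1/2}\renorm{g}_{R(A^{1/2})}$, uniformly in $t$.

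The main thing to be careful about — and the only real obstacle — is the well-posedness and regularity bookkeeping in step \textbf{(1)}: one must check that $W_A(t)g$, defined as the solution in $R(A^\alpha)$, actually lies in $D(A^{1/2}) \cap R(A^\alpha)$ so that Lemma \ref{lem:embedding1} applies, and that the formal spectral manipulations ($m(A)$ commuting with $e^{-tA}$, interchange of the spectral integral with the $\tfrac{dt}{t}$-integral defining $\renorm{\cdot}$) are legitimate. All of this is routine given that $g \in R(A^\alpha)$ and $A_\alpha$ is selfadjoint in $R(A^\alpha)$ — the functions $\sin(t\sqrt\lambda)$ and $\sin(t\sqrt\lambda)/(t\sqrt\lambda)$ are bounded, the integrands are nonnegative, and Tonelli/Fubini applies — but it is worth stating explicitly rather than glossing over. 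Everything else is a direct substitution into the already-proven Lemma \ref{lem:embedding1}.
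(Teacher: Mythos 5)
Your proposal is correct and follows essentially the same route as the paper: both arguments reduce the claim to the two bounds $\renorm{A^{1/2}W_A(t)g}_{R(A^{\alpha})}\leq\renorm{g}_{R(A^{\alpha})}$ and $\renorm{W_A(t)g}_{R(A^{\alpha})}\leq t\,\renorm{g}_{R(A^{\alpha})}$ and then invoke Lemma \ref{lem:embedding1} verbatim. The only (cosmetic) difference is that the paper obtains these two bounds from the energy conservation law for $A_\alpha$ in $R(A^{\alpha})$, whereas you obtain them from the explicit spectral formula $w(t)=A^{-1/2}\sin(tA^{1/2})g$ and the multiplier estimates $|\sin(t\sqrt{\lambda})|\leq 1$, $|\sin(t\sqrt{\lambda})/(t\sqrt{\lambda})|\leq 1$; for a selfadjoint generator these are two phrasings of the same fact.
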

\begin{proof}
Since the energy conservation law in $R(A^{\alpha})$ holds, we find that 
\begin{align*}
\renorm{\tfrac{d}{dt}W_A(t)g}_{R(A^{\alpha})}^2
+
\renorm{A_\alpha^{1/2}W_A(t)g}_{R(A^{\alpha})}^2
=
\renorm{g}_{R(A^{\alpha})}^2, \quad t\geq 0
\end{align*}
and therefore $\renorm{W_A(t)g}_{R(A^{\alpha})}\leq  t\renorm{g}_{R(A^{\alpha})}$ for $t\geq0$. 
Using Lemma \ref{lem:embedding1}, we have
\begin{align*}
\|W_A(t)g\|_X
&\leq 
2^{\frac{1}{2}+\alpha(1-2\alpha)}
\renorm{A^{1/2}W_A(t)g}_{R(A^{\alpha})}^{2\alpha}
\renorm{W_A(t)g}_{R(A^{\alpha})}^{1-2\alpha}
\\
&\leq 
2^{\frac{1}{2}+\alpha(1-2\alpha)}
t^{1-2\alpha}
\renorm{g}_{R(A^{\alpha})}.
\end{align*}
The proof is complete. 
\end{proof}

To prove Theorem \ref{thm:opposite}, 
we need the following connection between two evolution operators $e^{-tA}$ and $W_A(t)$, 
which is motivated by the well-known Hadamard transmutation formula.
\begin{lemma}\label{lem:Hadamard}
For every $g\in X$, the following identity holds:
\[
e^{-tA}g=
\frac{1}{2}\pi^{-\frac{1}{2}}t^{-\frac{3}{2}}
\int_0^\infty \sigma e^{-\frac{\sigma^2}{4t}}
W_{A}(\sigma)g\,d\sigma, \quad t>0.
\]
\end{lemma}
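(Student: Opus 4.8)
The plan is to reduce the operator identity to a scalar Gaussian integral via the spectral theorem. Recall that, by the functional calculus for the nonnegative selfadjoint operator $A$, the solution operator of \eqref{eq:abstract-wave} extends to a bounded operator on $X$ given by
\[
W_A(\sigma)=\int_{[0,\infty)}\frac{\sin(\sigma\sqrt{\lambda})}{\sqrt{\lambda}}\,dE(\lambda),
\]
with the integrand read as $\sigma$ at $\lambda=0$; hence $\|W_A(\sigma)\|_{X\to X}\leq \sigma$ and $\sigma\mapsto W_A(\sigma)g$ is continuous, so the Bochner integral $\int_0^\infty \sigma e^{-\sigma^2/(4t)}W_A(\sigma)g\,d\sigma$ converges absolutely in $X$ for every $t>0$ and $g\in X$ (the Gaussian weight $\sigma^2 e^{-\sigma^2/(4t)}$ is integrable on $(0,\infty)$).

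\emph{Step 1: the scalar identity.} First I would establish, for fixed $t>0$ and $\mu\geq 0$,
\[
\int_0^\infty \sigma e^{-\sigma^2/(4t)}\,\frac{\sin(\sigma\mu)}{\mu}\,d\sigma=2\sqrt{\pi}\,t^{3/2}e^{-t\mu^2},
\]
with $\sin(\sigma\mu)/\mu$ read as $\sigma$ when $\mu=0$. This follows by differentiating the classical formula $\int_0^\infty e^{-a\sigma^2}\cos(b\sigma)\,d\sigma=\tfrac12\sqrt{\pi/a}\,e^{-b^2/(4a)}$ in $b$, which gives $\int_0^\infty \sigma e^{-a\sigma^2}\sin(b\sigma)\,d\sigma=\tfrac{\sqrt{\pi}\,b}{4a^{3/2}}\,e^{-b^2/(4a)}$, and then taking $a=1/(4t)$, $b=\mu$ and dividing by $\mu$ (the case $\mu=0$ being the limit, or a direct Gaussian moment computation).

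\emph{Step 2: Fubini and conclusion.} Fix $t>0$, $g\in X$ and an arbitrary $z\in X$. Pairing the right-hand side of the claimed formula with $z$ gives
\[
\BigLR{\int_0^\infty \sigma e^{-\sigma^2/(4t)}W_A(\sigma)g\,d\sigma,\,z}_X=\int_0^\infty \sigma e^{-\sigma^2/(4t)}\int_{[0,\infty)}\frac{\sin(\sigma\sqrt{\lambda})}{\sqrt{\lambda}}\,d\lr{E(\lambda)g,z}_X\,d\sigma.
\]
The complex measure $d\lr{E(\lambda)g,z}_X$ has total variation at most $\|g\|_X\|z\|_X$, and the integrand is dominated by $\sigma^2 e^{-\sigma^2/(4t)}$, which is integrable on $(0,\infty)$ independently of $\lambda$; so Fubini's theorem applies. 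Interchanging the order of integration and inserting Step 1 with $\mu=\sqrt{\lambda}$ yields
\[
\int_{[0,\infty)}2\sqrt{\pi}\,t^{3/2}e^{-t\lambda}\,d\lr{E(\lambda)g,z}_X=2\sqrt{\pi}\,t^{3/2}\lr{e^{-tA}g,z}_X.
\]
Dividing by $2\sqrt{\pi}\,t^{3/2}$ and using that $z\in X$ is arbitrary gives $e^{-tA}g=\tfrac12\pi^{-1/2}t^{-3/2}\int_0^\infty \sigma e^{-\sigma^2/(4t)}W_A(\sigma)g\,d\sigma$, as claimed. The only delicate point is the bookkeeping in Step 2 — checking that the scalar spectral measure is genuinely finite and that the Gaussian weight dominates the linear growth of $W_A(\sigma)$ — but this is routine once the uniform bound $\bigl|\sigma e^{-\sigma^2/(4t)}\sin(\sigma\sqrt{\lambda})/\sqrt{\lambda}\bigr|\leq \sigma^2 e^{-\sigma^2/(4t)}$ is noted. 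No smoothness or decay of $g$ is needed, so the identity holds for all $g\in X$.
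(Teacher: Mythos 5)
Your proof is correct, but it takes a genuinely different route from the paper. The paper's argument is ``PDE-flavored'': it sets $v(t)=t^{-3/2}\int_0^\infty\sigma e^{-\sigma^2/(4t)}W_A(\sigma)g\,d\sigma$, rewrites it via integration by parts, checks that $v(t)\to 2\sqrt{\pi}\,g$ as $t\to 0$ and that $v'=-Av$, and concludes by uniqueness for the abstract heat equation; this computation is carried out for $g\in D(A^{1/2})$ (needed to justify the integration by parts and the differentiation under the integral sign) and the general case follows by density and the uniform bounds $\|W_A(\sigma)\|_{X\to X}\le\sigma$, $\|e^{-tA}\|_{X\to X}\le 1$. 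You instead diagonalize with the spectral theorem, reduce the operator identity to the classical scalar integral $\int_0^\infty\sigma e^{-a\sigma^2}\sin(b\sigma)\,d\sigma=\tfrac{\sqrt{\pi}\,b}{4a^{3/2}}e^{-b^2/(4a)}$, and glue with Fubini against the finite complex measure $d\langle E(\lambda)g,z\rangle_X$. Your constants check out (with $a=1/(4t)$, $b=\sqrt{\lambda}$ one gets exactly $2\sqrt{\pi}\,t^{3/2}e^{-t\lambda}$), the total-variation bound $\|g\|_X\|z\|_X$ and the domination $|\sin(\sigma\sqrt{\lambda})/\sqrt{\lambda}|\le\sigma$ are the right justifications, and your explicit functional-calculus definition of $W_A(\sigma)$ on all of $X$ makes the statement ``for every $g\in X$'' literal rather than implicit. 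What each approach buys: yours is more self-contained and gives the identity for all $g\in X$ in one stroke, with no uniqueness theorem or density step; the paper's avoids any explicit spectral decomposition and is the form of argument that survives in settings where one only has a contraction semigroup and a cosine family rather than a full spectral resolution. Either proof is acceptable here since $A$ is selfadjoint.
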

\begin{proof}
It suffices to show it for $g\in D(A^{1/2})$. Put 
\begin{align*}
v(t)=
t^{-\frac{3}{2}}
\int_0^\infty \sigma e^{-\frac{\sigma^2}{4t}}
W_{A}(\sigma)g\,d\sigma
=
2t^{-\frac{1}{2}}
\int_0^\infty e^{-\frac{\sigma^2}{4t}}
W_A'(\sigma)g\,d\sigma, \quad t>0,
\end{align*}
where we can check that two different expression 
coincide by using integration by parts. 
By change of variables 
and the dominated convergence theorem, we have 
\[
v(t)=
2
\int_0^\infty \rho^{-\frac{1}{2}}e^{-\rho}
W_A'(\sqrt{4t\rho})g\,d\rho
\to 
2\pi^{\frac{1}{2}}g
\]
as $t\to 0$. 
On the other hand, by differentiating the representation used above, we also have 
\begin{align*}
v'(t)
&=
2t^{-\frac{1}{2}}\int_0^\infty e^{-\rho}
W_A''(\sqrt{4t\rho})g \, d\rho
\\
&=
2t^{-\frac{1}{2}}\int_0^\infty e^{-\rho}
\Big(-AW_A(\sqrt{4t\rho})g\Big)\,d\rho
\\
&=
-t^{-\frac{3}{2}} A\left(\int_0^\infty \sigma e^{-\frac{\sigma^2}{4t}}
W_A(\sigma)g\,d\sigma\right)
=-Av(t).
\end{align*}
The uniqueness of the Cauchy problem of $v'+Av=0$ shows that 
$v(t)=2\pi^{\frac{1}{2}}e^{-tA}g$.
\end{proof}

\section{Applications to wave equations with potentials}\label{sec:app}

Theorem \ref{thm:wave-bounded} 
is the direct consequence of Proposition \ref{prop:abstractwave}. 

\begin{proof}[Proof of Theorem \ref{thm:wave-bounded}]
Let $\alpha\in I_S\cap (0,\frac{1}{2}]$ and $g\in C_0^\infty(\Omega)$. Then 
we see by Theorem \ref{thm:equivalence} that $g\in R(S^{\alpha})$. 
Therefore Proposition \ref{prop:abstractwave} provides that 
\[
t^{2\alpha-1}\|W_S(t)g\|_{L^2(\Omega)}\leq
2^{\frac{1}{2}+\alpha(1-2\alpha)}
\renorm{g}_{R(S^\alpha)}, \quad t>0. 
\]
The proof is complete.
\end{proof}

Next, we give a proof of Theorem \ref{thm:opposite}. 
\begin{proof}[Proof of Theorem \ref{thm:opposite}]
Let $\alpha_*\in (0,\frac{1}{2}]$. 
Assume that for every $g\in C_0^\infty(\Omega)$, 
the solution $W_S(\cdot)g$ of \eqref{intro:eq:hyper}
satisfies 
\[
t^{2\alpha_*-1}\|W_S(\cdot)g\|_{L^2}\leq C_g, \quad t\geq 0
\]
for some positive constant $C_g$ (depending only on $g$).
Then using Lemma \ref{lem:Hadamard}, 
we can deduce that for every $t>0$, 
\begin{align*}
\|e^{-tS}g\|_X
&=
\frac{1}{2}\pi^{-\frac{1}{2}}t^{-\frac{3}{2}}
\left\|
\int_0^\infty \sigma e^{-\frac{\sigma^2}{4t}}
W_{S}(\sigma)g\,d\sigma
\right\|_X
\\
&\leq 
\frac{1}{2}\pi^{-\frac{1}{2}}t^{-\frac{3}{2}}
\int_0^\infty \sigma e^{-\frac{\sigma^2}{4t}}
\|W_{S}(\sigma)g\|_X \,d\sigma
\\
&\leq 
\frac{C_g}{2}\pi^{-\frac{1}{2}}
t^{-\frac{3}{2}}
\int_0^\infty \sigma^{2-2\alpha_*} e^{-\frac{\sigma^2}{4t}}
\,d\sigma
\\
&=
2^{1-2\alpha_*}\pi^{-\frac{1}{2}}C_g
\Gamma\left(\frac{3-2\alpha}{2}\right)t^{-\alpha_*}.
\end{align*}
Therefore we can conclude that $\renorm{g}_{R(S^\alpha)}<+\infty$ for all $\alpha\in (0,\alpha_*)$.
\end{proof}

To close the paper, we give a proof of Proposition \ref{prop:hardy-critical}.
\begin{proof}[Proof of Proposition \ref{prop:hardy-critical}]
Let $g\in C_0^\infty(\R^N\setminus\{0\})$ be a radially symmetric function 
satisfying \eqref{intro:prop:hardy-critical:ass}. 
Put $w(x,t)=[W_{S_{\lambda_*}}(t)g](x)$.
For each $t>0$, 
we also put a new two-dimensional radially symmetric function 
$y\in \R^2\setminus \{0\}\mapsto w_*(y,t)$ as   
\[
w_*(y,t)=|x|^{\frac{N-2}{2}}w(x,t), \quad |x|=|y|
\]
(which is used for the corresponding critical parabolic equation \eqref{intro:eq:para}
in Ioku--Ogawa \cite{IokuOgawa2019}). 
Then we can see that for every $t>0$ and $r=|y|=|x|$, 
\begin{align*}
\Delta_{y}w_*
&=\pa_r^2w_*+\frac{1}{r}\pa_rw_*
\\
&=r^{\frac{N-2}{2}}
\left(
\pa_r^2w+\frac{N-1}{r}\pa_r w+\Big(\frac{N-2}{2}\Big)^{2}\frac{w}{r^2}\right).
\\
&=-r^{\frac{N-2}{2}}S_{\lambda_*}w.
\end{align*}
Observe that the Friedrichs extension 
of the minimal operator for $-\Delta_{\R^2\setminus \{0\}}$ coincides 
with that of $-\Delta_{\R^2}$ (that it, the singular point $y=0$ is negligible). 
Therefore we can find that $w_*$ satisfies 
the two-dimensional wave equation in the whole space:
\begin{equation}\label{eq:2d-wave}
\begin{cases}
\pa_t^2w_*(y,t)-\Delta_y w_*(y,t)=0 &\text{in}\ \R^2\times (0,\infty),
\\
(w_*,\pa_tw_*)(x,0)=(0,g_*(y)) &\text{in}\ \R^2,
\end{cases}
\end{equation}
where $g_*$ is given by the same manner as $w_*$, that is, $g_*(y)=g(x)$ with $|x|=|y|$. 
Noting that 
\begin{align*}
\int_{\R^2}g_*(y)\,dy
=
\int_{\R^2}g(x)|x|^{-\frac{N-2}{2}}dx\neq0, 
\end{align*}
we can adopt the result in Ikehata \cite[Theorem 1.2]{Ikehata2023}. Namely, 
one has
\[
c\log t \leq \|w_*(t)\|_{L^2(\R^2)}^2\leq C\log t, 
\quad t\geq 2 
\]
for some positive constants $c$ and $C$. By the definition of $w_*$, we obtain
\[
c'\log t \leq \|w(t)\|_{L^2(\R^N)}^2\leq C'\log t, \quad t\geq 2
\]
for some positive constants $c'$ and $C'$. The proof is complete. 
\end{proof}


{\small 

}
\end{document}